\newtheorem{theorem}{Theorem}
\newtheorem{lemma}[theorem]{Lemma}
\newtheorem{koro}[theorem]{Corollary}
\newtheorem{rem}[theorem]{Remark}
\newcommand{\fed}{\,\rule{.1mm}{.24cm}\rule{.24cm}{.1mm}\,}
\newcommand{\relint}{{\mathrm{relint}}}
\DeclareMathOperator{\Nor}{{\mathrm{Nor}}}
\newcommand{\HH}{\mathcal{H}}
\newcommand{\htau}{\tau}
\newcommand{\R}{{\mathbb R}}
\newcommand{\N}{{\mathbb N}}
\newcommand{\Z}{{\mathbb Z}}
\newcommand{\K}{{\mathcal K}}
\def\section{%
\setcounter{equation}{0} \setcounter{theorem}{0} \@startsection
{section}{1}{\z@}{-4.0ex plus -1ex minus
    -.2ex}{2.3ex plus .2ex}{\bf}}
\def\subsection{\@startsection{subsection}{2}{\z@}{-3.25ex plus-1ex
    minus-.2ex}{1.5ex plus.2ex}{\reset@font\bf}}
\begin{document}

\title{Extensions of translation invariant valuations on polytopes}%[Extensions of translation invariant valuations]

\author{Wolfram Hinderer, Daniel Hug and 
Wolfgang Weil}
\date{\today}

\maketitle
\begin{abstract}
\noindent We study translation invariant, real-valued valuations on the class of convex polytopes in Euclidean space and discuss which continuity properties are sufficient for an extension of such valuations to all convex bodies. For this purpose, we introduce flag support measures of convex bodies via a local Steiner formula and derive 
some of the properties of these measures. 

\medskip

\noindent\textit{Key words:}\medskip
Valuation, Steiner formula, support measure, flag measure

\noindent\textit{2010 Mathematics Subject Classification:} Primary: 52A20, 52A39; Secondary: 52B45, 52B11
\end{abstract}

\section{Translation invariant valuations}

Let $\K$ be the space of  convex bodies (non-empty compact convex sets) in $\R^d$, $d\ge 3$, with scalar product $\langle\cdot\, , \cdot\rangle$ and norm $\|\cdot\|$. We  endow $\K$ with the topology derived from the Hausdorff metric $d_H$ (see \cite{S} for background information and notions not explicitly defined  here). Let $\cal P\subset\K$ be the set of convex polytopes in $\R^d$. A real- or measure-valued functional $\varphi$ on $\cal K$ or $\cal P$ is called a {\it valuation}, if it is additive in the sense that
$$
\varphi (K\cup M) + \varphi (K\cap M) = \varphi (K) + \varphi (M) ,
$$
whenever $K,M$ and $K\cup M$ lie in ${\cal K}$ (resp. in ${\cal P}$). For a real-valued valuation $\varphi$ on ${\cal P}$ which is invariant under translations and weakly continuous (that is, continuous with respect to parallel displacements of the facets of the polytopes), McMullen \cite{McM83,McM93} has shown that
\begin{equation}\label{val}
\varphi (P) =\varphi_0(P)+\sum_{j=1}^{d-1} \sum_{F\in {\cal F}_j(P)} f_{j}(n(P,F)) V_j(F) + \varphi_d(P) ,
\end{equation}
where $\varphi_0(P)=\varphi(\{0\})$ (cf.~\cite[pp.~214-5]{McMS} or \cite[Theorem 6.4.7]{S}) and $\varphi_d$ is 
a multiple of the volume functional. For $j\in\{0, \ldots,d-1\}$, ${\cal F}_j(P)$  is the collection of $j$-faces of $P$,  $V_j$ is the $j$-th intrinsic volume, $n(P,F)$ is the intersection of the normal cone $N(P,F)$ of $P$ at $F$ and the unit sphere $S^{d-1}$, which is a $(d-j-1)$-dimensional spherical polytope, and $f_{j}$ is a simple, additive functional on the class ${\wp}_{d-j-1}$ of  at most $(d-j-1)$-dimensional spherical polytopes. 
As usual,  $f_j$ is said to be 
simple if it is zero on spherical polytopes of dimension less than $d-j-1$.  In comparison to related work (see \cite{McM83,McM93}), we prefer to express our formulas in terms of spherically convex subsets of the unit sphere instead of using the convex cones spanned by such subsets. 

Conversely, any sequence of simple,  additive functionals $f_j : {\wp}_{d-1-j} \to \R$, $j=1,\ldots,d-1$,  yields a weakly continuous, translation invariant valuation $\varphi$ on ${\cal P}$ via \eqref{val}. To be more precise,
\begin{equation}\label{phij}
\varphi_j (P) =  \sum_{F\in {\cal F}_j(P)} f_{j}(n(P,F)) V_j(F)
\end{equation}
defines a weakly continuous, translation invariant valuation $\varphi_j$ on ${\cal P}$ which is homogeneous of degree $j$, for $j=1,\ldots,d-1$. 
Thus, \eqref{val} corresponds to the decomposition $\varphi = \sum_{j=0}^d \varphi_j$ of $\varphi$ into homogeneous components (see \cite{McM83,McM93}). We call $f_{1},\ldots,f_{d-1}$  {\it associated functions} of $\varphi$. If $P$ has dimension $k<j$, then ${\cal F}_j(P) = \emptyset$ and so $\varphi_j(P)=0$, by definition. 

Convex polytopes are dense in $\K$. Therefore it is  natural  to ask which $j$-homogeneous valuations $\varphi_j$, $j\in\{1,\ldots,d-1\}$, on ${\cal P}$ allow an extension to ${\cal K}$ by approximation, in a continuous way. In a first attempt, one could ask whether a valuation on ${\cal P}$ which is continuous with respect to the Hausdorff metric can be extended continuously to ${\cal K}$. This seems to be a difficult problem and neither a positive result nor a counterexample is known. A second, also quite natural approach, would be to ask for appropriate continuity conditions on the associated function $f_j$ which guarantee a continuous extension (and thus also imply that $\varphi_j$ is continuous on ${\cal P}$). Throughout this paper, we follow this second line.

A rather strong sufficient condition would be to assume that
\begin{equation}\label{cont1}
f_j (p) = \int_p \tilde f_j(u) \, \HH^{d-j-1}(du),\quad p\in {\wp}_{d-j-1},
\end{equation}
for some continuous function $\tilde f_j$ on $S^{d-1}$. Here we write $\HH^s$, $s\ge 0$, for the $s$-dimensional Hausdorff measure in Euclidean space $\R^d$. The $j$-th area measure $S_j(P,\cdot)$, $j\in\{0,\ldots,d-1\}$, of a polytope $P$ is a Borel measure on the unit sphere $S^{d-1}$  (see \cite[Section 4.2]{S} for an introduction to area measures), which is given by
$$
S_j(P,\cdot) = \binom{d-1}{j}^{-1} \sum_{F\in {\cal F}_j(P)} V_j(F) {\int_{n(P,F)}}
 {\bf 1}\{ u\in \cdot\}\, \HH^{d-j-1}(du).
$$
Hence, assuming \eqref{cont1}, it  follows that
$$
\varphi_j (P) = \binom{d-1}{j} \int_{S^{d-1}} \tilde f_j(u) \,S_j(P,du).
$$
The map $K\mapsto S_j(K,\cdot)$ is weakly continuous, that is, continuous with respect to the Hausdorff metric $d_H$ on $\K$ and the weak 
topology on the space of finite Borel measures on $S^{d-1}$. Therefore
$$
\varphi_j (K) = \binom{d-1}{j} \int_{S^{d-1}} \tilde f_j(u)\,S_j(K,du),
$$
for $K\in\K$, 
defines a continuous extension of $\varphi_j$ to ${\cal K}$.  
We remark that \eqref{cont1} 
is fulfilled if  $\varphi_j$ is  smooth in the sense of Alesker \cite{A2, Al} (see also \cite{Bernig2011} 
for a recent survey on valuations and applications to integral geometry). 
 
A much weaker natural condition would be to require that $f_j$ is continuous with respect to the Hausdorff metric on spherical polytopes.
In the following, we shall see that even a weak absolute continuity of $f_j$, which implies continuity of $f_j$, is not sufficient. On the other hand, we shall show that a continuous extension exists, if $f_j$ satisfies a special kind of absolute continuity, which is still weaker than \eqref{cont1}.

Subsequently, we consider weakly continuous, $j$-homogeneous, translation invariant valuations $\varphi_j$ on ${\cal P}$, for $j\in\{1,\ldots,d-1\}$. We first specify the different continuity properties for such valuations, which we shall study. We call $\varphi_j$ {\it strongly continuous} if it has an associated function $f_j$ which satisfies \eqref{cont1} with a continuous function $\tilde f_j$ on $S^{d-1}$. Further, we say that $\varphi_j$ is {\it flag-continuous}, if it has an associated function $f_j$   such that \eqref{cont1} holds with a continuous function $\tilde f_j$, which may depend on both, the normal vector $u$ and the linear subspace $\langle p\rangle$ generated by $p$. To be more precise, we introduce, for $q\in\{1,\ldots, d-1\}$, the {\it flag manifold}
$$
F(d,q) := \{ (u,L)\in S^{d-1}\times G(d,q): u\in L\} ,
$$
where $G(d,q)$ is the Grassmannian of $q$-dimensional linear subspaces of $\R^d$. 
With the usual topology, $F(d,q)$ is a compact space. 
The property of flag-continuity for $\varphi_j$ means that
\begin{equation}\label{cont2}
f_j (p) = \int_p \tilde f_j(u,\langle p\rangle)\, \HH^{d-j-1}(du),\quad p\in {\wp}_{d-j-1},
\end{equation}
for some continuous function $\tilde f_j$ on $F(d,d-j)$, if $\dim(\langle p\rangle) =d-1-j$, and $f_j(p)=0$  if $\dim(\langle p\rangle) <d-1-j$.  We call a flag-continuous valuation $\varphi_j$ {\it strongly flag-continuous}, if the function $\tilde f_j$ on $F(d,d-j)$ arises as the image $\tilde f_j = T_{j} \tilde g_j$ of some continuous function $\tilde g_j$ on $F(d,d-j)$ under the transform 
$$
T_{j} : C(F(d,d-j))\to C(F(d,d-j))
$$ given by
\begin{equation}\label{integraltransform}
(T_{j}\tilde h) (u,L) = \int_{G(\langle u\rangle ,d-j)} [M,L^\perp]^2\, \tilde h(u,M) \,\nu^{\langle u\rangle}_{d-j}(dM) ,
\end{equation}
for $(u,L)\in F(d,d-j)$. 
For subspaces $U_i\in G(d,q_i)$, $i=1,2$, with $q:=q_1+q_2\le d$, the expression $[U_1,U_2]$ is defined as in 
\cite[p.~597-8]{SW}. In particular, if $a_1,\ldots,a_{q_1}$ is an orthonormal basis of $U_1$ and $b_1,\ldots,b_{q_2}$ is an orthonormal basis of $U_2$, then $[U_1,U_2]$ is the $q$-dimensional volume of the parallelepiped spanned by $a_1,\ldots,a_{q_1}, b_1,\ldots,b_{q_2}$. Hence, $[U_1,U_2]=|\det(a_1,\ldots,a_{q_1}, b_1,\ldots,b_{q_2})|$ if ${\rm dim}(U_1+U_2)=q$, 
where the determinant is calculated in $U_1+U_2$, 
 and $[U_1,U_2]=0$ if ${\rm dim}(U_1+U_2)<q$.  The Grassmannian $G(\langle u\rangle ,d-j)$ consists of all $U\in G(d,d-j)$ which contain the linear subspace $\langle u\rangle$ spanned by $u$. Moreover,  $\nu^{\langle u\rangle}_{d-j}$ is the unique $SO(u)$-invariant  probability measure on $G(\langle u\rangle ,d-j)$, where $SO(u)$ denotes the set of all  proper (orientation preserving) rotations of $\R^d$ which fix  $u\in S^{d-1}$. 
Finally, we observe that if $\varphi_j$ is flag-continuous, then the function $f_j$ in \eqref{cont2} is continuous on  ${\wp}_{d-j-1}$ with respect to the Hausdorff metric. 

From these definitions, it follows immediately that the following implications hold:
\begin{align*}
 \varphi_j&{\rm \ strongly\ continuous\ }\Rightarrow \varphi_j{\rm \ strongly\ flag-continuous\ }\\ & \Rightarrow \varphi_j{\rm \ flag-continuous\ }\Rightarrow \varphi_j{\rm \ weakly\ continuous\ }.
\end{align*}

Since for $j=d-1$ the three notions, strong continuity, strong flag-continuity and flag-continuity  all coincide, and since then a continuous extension to all convex bodies is always possible,  we assume that $j\in\{1,\ldots,d-2\}$ throughout the following. 

In the next section, we shall show that, for $j\in\{1,\ldots,d-2\}$, flag-continuity is not sufficient for the existence of a continuous extension of $\varphi_j$ to ${\cal K}$.  The counterexample we present is based on a certain flag measure for polytopes and on the construction of two sequences of convex polytopes $(P_k)_{k\in\N}$ and $(Q_k)_{k\in\N}$, which converge to the same convex body, but for which a suitably constructed flag-continuous valuation $\varphi_j$ has different limits, that is, $\lim_{k\to\infty}\varphi_j(P_k)\neq \lim_{k\to\infty}\varphi_j(Q_k)$. However, in Section 4 we shall see that an extension exists, if $\varphi_j$ is strongly flag-continuous. This result makes use of another sequence of flag measures which we introduce and study in Section 3. These flag measures are obtained as coefficient measures of a local Steiner formula, first for convex polytopes and then for general convex bodies by an approximation argument. 
This approach extends the one well known for support measures of convex bodies. 

The question of extendability of valuations from polytopes to arbitrary bodies is also discussed in a recent preprint by S. Alesker \cite{A3}. For $1\le j\le d-2$, he shows the existence of a smooth function $\tilde f_j$ such that the valuation given by \eqref{phij}, with $f_j$ and $\tilde f_j$ related by 
\eqref{cont2},  has no continuous extension to ${\cal K}$. His approach uses sophisticated techniques from representation theory, whereas our counterexample is based on a specific geometric construction. In \cite{A3}, also the vector space of functions $\tilde f_j$ which allow a continuous extension is described for $j=1$ and dimension $d=3$, again in an abstract setting. Our results in Section 4 give sufficient conditions for extendability, for all values of $d$ and $j$. We are grateful to S. Alesker for useful discussions on our two different approaches.

\section{Valuations without continuous extensions}

In order to show that flag-continuity is not sufficient for the existence of a continuous extension of $\varphi_j$ to ${\cal K}$, for an arbitrary polytope $P$ in $\R^d$, we define  a flag measure  $\htau_j(P,\cdot)$, for $j\in\{ 0,\ldots,d-1\}$,  on $F(d,d-j)$ by
\begin{equation}\label{firstflagmeasure}
\htau_j(P,\cdot ) := \sum_{F\in{\cal F}_j(P)} V_j(F)\, \int_{n(P,F)} {\bf 1}\{ (u,F^\bot)\in \cdot\}\, \HH^{d-j-1}(du) ,
\end{equation}
where $F^\perp\in G(d,d-j)$ denotes the subspace  orthogonal to the linear subspace $L(F)\in G(d,j)$  parallel to $F$. 
For $d=3$ and $j=1$ this measure was introduced by Ambartzumian \cite{Amb1, Amb2}, who used it for an integral representation of the width function and a subsequent characterization of zonotopes.

It follows from \eqref{firstflagmeasure} that 
$$
\htau_j(P,F(d,d-j))= \sum_{F\in{\cal F}_j(P)} V_j(F) \,\HH^{d-j-1}(n(P,F))=\omega_{d-j}V_j(P),
$$
where $\omega_n:=2\pi^{n/2}/\Gamma(n/2)$ is the $(n-1)$-dimensional Hausdorff measure of the unit sphere $S^{n-1}$.  
Hence, for a sequence $(P_k)$, $k\in\N$, with $P_k\to K\in\K$, as $k\to\infty$, the measures $\htau_j(P_k,\cdot )$ are uniformly bounded, for $k\in\N$, since $\htau_j(P_k,F(d,d-j))=\omega_{d-j}V_j(P_k)\to \omega_{d-j}V_j(K)$, as $k\to\infty$, by the continuity of the intrinsic volumes. Hence, there is a subsequence $\htau_j(P_{k_i},\cdot )$, $i\in\N$, which converges weakly to a measure $\htau$ on $F(d,d-j)$. The following result  shows that, for $j\in\{ 1,\ldots,d-2\}$, the limit measure $\htau$ in general will depend on the approximating sequence $P_k\to K$, hence there is no continuous extension of the measures $\htau_j(P,\cdot )$ from polytopes $P$ to arbitrary convex bodies $K\in{\cal K}$. 

We add a few comments on the construction of the polytopes $P_k$, which are used in the proof, as well as on the 
strategy of the proof. In \cite{Hind}, in dimension $d=3$ a polytope $P_k\subset\R^d$, $k\in\N$, is defined as  the convex hull of those 
points on the unit sphere whose projection to $\R^{d-1}$ lies in the discrete set $2^{-k}\Z^{d-1}$. The polytopes $P_k$ 
converge to the rotation invariant unit ball $B^d$, as $k\to\infty$, and a subsequence of $\htau_j(P_k,\cdot)$ converges weakly to a 
measure $\htau$ on $F(d,d-j)$. If $\htau$ is not rotation invariant, then there is a rotation $\vartheta\in SO_d$ such that 
$\vartheta \htau\neq\htau$, and hence there is a continuous function $\tilde f$ on $F(d,d-j)$ which separates $\vartheta\htau$ and $\htau$. 
Using $\tilde f$, we can define a flag-continuous valuation $\varphi$ such that $
\lim_{k\to\infty} \varphi (P_k)\not= \lim_{k\to\infty} \varphi (\vartheta P_k)$. In order to show that $\htau$ is not rotation invariant, it is 
crucial to know and control the set of $j$-faces of $P_k$. Already for $d=3$ and $j=1$ (which is considered in \cite{Hind}) this turns out 
to be a delicate task and at least one symmetry argument in \cite{Hind} is apparently not available. 

For this reason, we use a modified construction for a sequence of polytopes that was first described, and used for a different purpose, in 
\cite{HS}. In \cite{HS},  a polytope $P_t$, for $t>0$, is defined as the convex hull of a subset of those points on a rotational paraboloid whose projection to $\R^{d-1}$ lies in the discrete set $2t\Z^{d-1}$. For these polytopes, the set of $j$-faces, for general dimension and $j\in\{0,\ldots,d-1\}$, is explicitly determined in \cite{HS}. The limit set $K$  of $(P_t)$, as $t\to 0^+$, (that is, part of the paraboloid)  is then  invariant under rotations fixing the vertical axis. In contrast, the facial structure of the polytopes 
$P_t$, for $t>0$, remains rigid in the sense that the orthogonal projections to $\R^{d-1}$ of the linear subspaces which are parallel to the faces of the polytopes $P_t$ are coordinate subspaces. 
This is enough to admit a modification of the argument described in the preceding paragraph (and based on \cite{Hind}). The construction of the polytopes $P_t$ is related to a well known construction in computational geometry used to determine the Delaunay triangulation of a point set in $\R^{d-1}$ via the convex hull of the set of points vertically lifted to a paraboloid (see \cite[Section 13.1/2, Observation 13.13]{Edelsbrunner} and \cite[Section 7]{JT2013}).

\begin{theorem} For each $j\in\{1,\ldots, d-2\}$, there is a flag-continuous $j$-homogeneous valuation $\varphi= \varphi_j$ on ${\cal P}$ and a convex body $K\in\K$ such that there exist two sequences of polytopes, $(P_k)$ and $(Q_k)$, $k\in\N$, with $P_k \to K$ and $ Q_k\to K$, as $k\to\infty$, and such that the limits $\lim_{k\to\infty} \varphi (P_k)$ and $\lim_{k\to\infty} \varphi (Q_k)$ exist but satisfy
$$
\lim_{k\to\infty} \varphi (P_k)\not= \lim_{k\to\infty} \varphi (Q_k).
$$
\end{theorem}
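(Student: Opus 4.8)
For $\tilde f_j\in C(F(d,d-j))$ put $f_j(p):=\int_p\tilde f_j(u,\langle p\rangle)\,\HH^{d-j-1}(du)$, $p\in{\wp}_{d-1-j}$; this is simple and additive, so by the discussion after \eqref{val} it is the associated function of a $j$-homogeneous, translation invariant, flag-continuous valuation $\varphi=\varphi_j$ on ${\cal P}$. Since the linear hull $\langle n(P,F)\rangle$ of the normal cone of a $j$-face $F$ of a polytope $P$ equals $F^\perp$, formula \eqref{firstflagmeasure} yields $\varphi(P)=\int_{F(d,d-j)}\tilde f_j\,d\htau_j(P,\cdot)$, and it also gives the equivariance $\htau_j(\vartheta P,\cdot)=\vartheta_*\htau_j(P,\cdot)$ for every rotation $\vartheta$ of $\R^d$ acting diagonally on $F(d,d-j)$. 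Hence it suffices to produce $K\in\K$, polytopes $P_k\to K$ with $\htau_j(P_k,\cdot)\to\htau$ weakly, and a rotation $\vartheta$ with $\vartheta K=K$ and $\vartheta_*\htau\neq\htau$: choosing $\tilde f_j\in C(F(d,d-j))$ that separates $\htau$ from $\vartheta_*\htau$ (possible, as distinct finite Borel measures on the compact space $F(d,d-j)$ are separated by a real continuous function) and setting $Q_k:=\vartheta P_k$, we get $P_k,Q_k\to K$ and, using that pushforward by the homeomorphism $\vartheta$ preserves weak convergence,
$$
\lim_k\varphi(P_k)=\int\tilde f_j\,d\htau\neq\int\tilde f_j\,d(\vartheta_*\htau)=\lim_k\varphi(Q_k).
$$

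\textbf{The polytopes and the limit flag measure.} Take $K$ and the polytopes $(P_t)_{t>0}$ from the construction of \cite{HS}: $P_t$ is the convex hull of the vertical lifts to a rotational paraboloid of a suitable bounded set of lattice points whose projections lie in $2t\Z^{d-1}$, one has $P_t\to K$ as $t\to 0^+$, and the rotations $\vartheta$ with $\vartheta K=K$ are exactly those fixing the vertical axis $e$. Fix $t_k\downarrow 0$; since $\htau_j(P_{t_k},F(d,d-j))=\omega_{d-j}V_j(P_{t_k})\to\omega_{d-j}V_j(K)$ as noted above, a subsequence --- still written $P_k=P_{t_k}$ --- satisfies $\htau_j(P_k,\cdot)\to\htau$ weakly. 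By \cite{HS} the $j$-faces of the lower boundary of $P_t$ are precisely the vertical lifts of the $j$-faces of the Delaunay complex of $2t\Z^{d-1}$: a face sitting over $x_0\in\R^{d-1}$ whose projection has direction space $D$ --- with $D$ ranging over the \emph{finite} set ${\cal D}_j$ of direction spaces of $j$-faces of the Delaunay complex of $\Z^{d-1}$, each a $j$-dimensional subspace of $e^\perp\cong\R^{d-1}$ --- lifts to a $j$-face of $P_t$ with direction space $L_D(x_0)=\{(v,\langle x_0,v\rangle):v\in D\}$ up to an error of order $t$, with $j$-volume of order $t^j$ and normal cone of $\HH^{d-j-1}$-measure of order $t^{d-j-1}$, each carrying a positive limiting density; the remaining $j$-faces --- those near the rim of the region (of total weight $O(t)$) and those of the flat cap of $P_t$ (which are horizontal, hence contribute to $\htau$ only at pairs with $e\in L$) --- do not affect the analysis below. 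Feeding this into \eqref{firstflagmeasure} turns the sums $\htau_j(P_t,\cdot)$ into Riemann sums; letting $\mu$ be the image of $\htau$ under $(u,L)\mapsto L$, one finds that every $L$ in the support of $\mu$ with $e\notin L$ contains one of the \emph{fixed} $(d-1-j)$-dimensional subspaces $W_D:=D^\perp\cap e^\perp$, $D\in{\cal D}_j$ (because $L_D(x_0)^\perp\supseteq W_D$ always), while --- the densities being positive --- such $L$ with $e\notin L$ do occur.

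\textbf{Non-invariance of $\htau$.} Suppose $\htau$ were invariant under all rotations fixing $e$, i.e.\ under the stabilizer $SO(d-1)$ of $e$. Then the restriction of $\mu$ to the $SO(d-1)$-invariant open set $\{L\in G(d,d-j):e\notin L\}$ would be $SO(d-1)$-invariant, so its (closed, invariant) support would be a union of $SO(d-1)$-orbits. Pick a generic $L_0=L_{D_0}(x_0)^\perp$ in this support; for every $\vartheta\in SO(d-1)$ the plane $\vartheta L_0$ again lies in the support, hence $\vartheta L_0\supseteq W_D$ for some $D\in{\cal D}_j$, while $\vartheta L_0\cap e^\perp=\vartheta W_{D_0}$ has dimension $d-1-j=\dim W_D$; therefore $W_D=\vartheta W_{D_0}$. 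Thus $\vartheta W_{D_0}\in\{W_D:D\in{\cal D}_j\}$ for every $\vartheta\in SO(d-1)$ --- impossible, since $\{W_D\}$ is a finite family of subspaces of $\R^{d-1}$ that are neither $\{0\}$ nor all of $\R^{d-1}$ (here $1\le j\le d-2$ enters) and $SO(d-1)$ acts non-trivially on such subspaces. Hence $\vartheta_*\htau\neq\htau$ for some rotation $\vartheta$ fixing $e$, and since $\vartheta K=K$, the reduction completes the proof.

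\textbf{The main obstacle.} The real work lies in the second step: extracting from the combinatorial description of the faces of $P_t$ in \cite{HS} enough uniform (in $t$) metric information --- the asymptotics of the intrinsic volumes and of the spherical measures of the normal cones of the $j$-faces, the negligibility of the rim faces, and the purely ``horizontal'' contribution of the cap --- to pin down the limit $\htau$ and thereby the structure of the support of $\mu$. A secondary subtlety, absent from the rotationally symmetric situation of \cite{Hind}, is that $K$ possesses only axial symmetry, so the separating rotation must be located within this smaller group; the Schubert-type description of the support of $\mu$ (built from the sets $\{L:W_D\subseteq L\}$ attached to the lattice directions $W_D$) is exactly what makes that possible.
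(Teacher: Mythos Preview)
Your reduction in the first paragraph is correct and matches the paper exactly: the equivariance $\htau_j(\vartheta P,\cdot)=\vartheta_*\htau_j(P,\cdot)$, the passage to a subsequence with weak limit $\htau$, and the separation of $\htau$ from $\vartheta_*\htau$ by a continuous function are all as in the paper's proof.

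The difference, and the gap, lies in how you establish that $\htau$ is not $SO(e_d)$-invariant. You propose to pin down the support of the projected measure $\mu$ on $G(d,d-j)$ by an asymptotic analysis of the faces of $P_t$: face volumes of order $t^j$, normal-cone measures of order $t^{d-j-1}$, rim faces of total weight $O(t)$, positive limiting densities. You yourself flag this as ``the main obstacle'', and indeed none of it is carried out. In particular, the polytopes $P_t$ in the paper's construction are convex hulls of points on \emph{two} paraboloids, so besides the lifted Delaunay faces and the horizontal cap faces there are $j$-faces near the equator with vertices on both sheets; your claim that these contribute negligibly is not justified. Without controlling them, the assertion that every $L$ in the support of $\mu$ with $e\notin L$ contains some $W_D$ is unproved, and the orbit argument cannot start. (A smaller point: your orbit argument also needs $L_0\not\subseteq e^\perp$, not merely $e\notin L_0$, in order to conclude $\dim(L_0\cap e^\perp)=d-1-j$ and hence $L_0\cap e^\perp=W_{D_0}$; this too would follow from the missing asymptotics but is not stated.)

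The paper avoids all of this. Rather than describing the support of $\htau$, it exhibits a single closed set $A=\{L\in G(d,j):\|e_d\vert L\|\le 2^{-1/2},\ L\vert e_d^\perp\in\mathcal{D}\}$, where $\mathcal{D}$ is the finite set of direction spaces of $j$-faces of $[-1,1]^{d-1}$, and shows two things. First, $\tilde\nu(A)=0$ for every finite $SO(e_d)$-invariant measure $\tilde\nu$ on $G(d,j)$: averaging over the Haar measure of $SO(e_d)$ reduces this to the vanishing of $\nu^{e_d}(\{\vartheta:\vartheta(L\vert e_d^\perp)\in\mathcal{D}\})$, which holds since $\mathcal{D}$ is finite and $1\le j\le d-2$. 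Second, the image $\tilde\htau$ of $\htau$ under $(u,L)\mapsto L^\perp$ satisfies $\tilde\htau(A)>0$. For this the paper does \emph{not} compute $\htau$: it bounds $\htau_j(P_t,\{(u,L):L^\perp\in A\})$ from below by the contribution of only those $j$-faces meeting the half-cylinder $B_0=\{x\in\R^{d-1}:\|x\|\le 1/4\}\times[0,\infty)$, which lie safely on one paraboloid and satisfy $L(F)\in A$ by an elementary slope estimate; this contribution is at least $\binom{d-1}{j}C_j(P_t,B_0)$. Then the known weak continuity of the curvature measures $C_j$ together with the Portmanteau inequality for closed sets gives $\tilde\htau(A)\ge\binom{d-1}{j}C_j(K,B_0)>0$. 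No face-by-face asymptotics are needed, and the equatorial faces are simply ignored rather than estimated.
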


\begin{proof} Let $j\in\{1,\ldots,d-2\}$ be fixed. Let $e_1,\ldots,e_d$ be the standard (orthonormal) basis of $\R^d$. We identify $\R^{d-1}$ and $\R^{d-1}\times\{0\}\subset\R^d$. For $t>0$ and $e=e_1+\ldots+e_{d-1}$, the cubes $t(e+2z)+[-t,t]^{d-1}$, $z\in \Z^{d-1}$, are the cells of a polytopal complex in $\R^{d-1}$, denoted by $\mathcal{C}_t$, with vertex set $2t\Z^{d-1}$. Let  $l_-:\R^{d-1}\to\R^d$ and $l_+:\R^{d-1}\to\R^d$ be the functions defined by
$$
\ell_-(x):=-e_d+x+\|x\|^2e_d,\quad \ell_+(x):=e_d+x-\|x\|^2e_d,\qquad x\in\R^{d-1}.
$$
Further, we define the two sets
$$
\mathcal{L}_\pm(t):=\ell_\pm(\{2tz:z\in \Z^{d-1},1-4t^2\|z\|^2\ge 0\}).
$$
Then 
$$
P_t:=\text{conv}(\mathcal{L}_-(t)\cup\mathcal{L}_+(t))
$$
is a convex polytope, with vertices on the rotational paraboloids $\ell_-(\R^{d-1})$ and $\ell_+(\R^{d-1})$, respectively. As $t\to 0^+$,  
the polytopes $P_t$ converge to the convex body
$$
K:=\{(x,s)\in\R^{d-1}\times\R:|s|\le 1-\|x\|^2\},
$$
which is symmetric with respect to $\R^{d-1}$ and invariant under rotations $\vartheta\in SO(e_d)$. 
By the remarks preceding the statement of the theorem, we may assume that $\htau_j(P_{t},\cdot )\to \htau$ weakly, as $t\to 0^+$. The rotation group acts in a natural way on $F(d,d-j)$, that is, for $(u,L)\in F(d,d-j)$ and $\vartheta\in SO_d$, we consider the operation $SO_d\times F(d,d-j)\to F(d,d-j)$ given by $\vartheta (u,L):=(\vartheta u,\vartheta L)$. 
We shall show that $\htau$ is not $SO(e_d)$-invariant, that is,  there exists a rotation $\vartheta\in SO(e_d)$ such that the image measure $\vartheta \htau$ of $\htau$ under $\vartheta$ is different from $\htau$. If this is shown, we put $Q_t:= \vartheta P_t$, which implies that $Q_t\to K$, as $t\to 0^+$. Moreover, we have
\begin{align*}
\htau_j(Q_t,\cdot)&=\htau_j(\vartheta P_t,\cdot)\\
&= \sum_{F\in{\cal F}_j(\vartheta P_t)} V_j(F)\int_{n(\vartheta P_t,F)}\mathbf{1}\{(u,F^\perp)\in\cdot\} \,\HH^{d-j-1}(du)\\
&= \sum_{G\in{\cal F}_j( P_t)} V_j(\vartheta G)\int_{n(\vartheta P_t,\vartheta G)}\mathbf{1}\{(u,(\vartheta G)^\perp)\in\cdot\} \,\HH^{d-j-1}(du)\\
&= \sum_{G\in{\cal F}_j( P_t)} V_j( G)\int_{n( P_t, G)}\mathbf{1}\{\vartheta (u,  G^\perp)\in\cdot\} \,\HH^{d-j-1}(du)\\
&=\htau_j(P_t,\vartheta^{-1}(\cdot))=\vartheta\htau_j(P_t,\cdot).
\end{align*}
Hence, $\htau_j(Q_{t},\cdot )= \vartheta \htau_j(P_{t},\cdot ) \to \vartheta  \htau$ as $t\to 0^+$. Since $\vartheta \htau\neq\htau$, there exists a continuous  function (in fact, a  function of class $C^\infty$) $\tilde f$ on $F(d,d-j)$ such that
\begin{equation}\label{limit}
\int_{F(d,d-j)} \tilde f(u,L)\ (\vartheta \htau) (d(u,L) ) \not= \int_{F(d,d-j)} \tilde f(u,L)\ \htau (d(u,L) ).
\end{equation}
A flag-continuous, $j$-homogeneous valuation $\varphi$ on ${\cal P}$ is defined by
\begin{align*}
\varphi(P):= & \sum_{F\in{\cal F}_j(P)}V_j(F)\int_{n(P,F)} \tilde f(u,F^\bot)\ \HH^{d-1-j}(du)\\
=& \int_{F(d,d-j)}\tilde f(u,L)\, \tau_j(P,d(u,L)).
\end{align*}
Since $\tau_j(P_t,\cdot)\to \tau$ weakly, as $t\to 0^+$, it follows that $\varphi$ satisfies
$$
 \varphi (P_t)
 \to \int_{F(d,d-j)} \tilde f (u,L)\ \htau (d(u,L) ) 
$$
and 
\begin{align*}
 \varphi (Q_t)&=\int_{F(d,d-j)}\tilde f (u,L)\ \vartheta\htau_j(P_t,d(u,L) )\\
 &\to \int_{F(d,d-j)} \tilde f (u,L)\ (\vartheta \htau) (d(u,L) )  ,
\end{align*}
as $t\to 0^+$. 
Thus, \eqref{limit} implies that
$$
\lim_{t\to 0^+} \varphi (P_t)\not= \lim_{t\to 0^+} \varphi (Q_t).
$$

It remains to be shown that $\vartheta \htau \not= \htau$, for a suitable rotation $\vartheta\in SO(e_d)$. 
It is sufficient to show that the measure $\tilde\tau$ on $G(d,j)$, which is the image measure of $\htau $ under the mapping $F(d,d-j)\to G(d,j)$,  $(u,L)\mapsto L^\bot$, is not $SO(e_d)$-invariant. The latter follows, if we find a Borel set $A\subset G(d,j)$ with $\tilde\htau (A)>0$ and such that $\tilde\nu_j (A)=0$, for each $SO(e_d)$-invariant finite measure $\tilde\nu_j$ on  $G(d,j)$. 
 
We consider the finitely many subspaces parallel to the $j$-faces of the cube $[-1,1]^{d-1}$, that is, 
$$
\mathcal{D}:=\{L(G) : G\in\mathcal{F}_j([-1,1]^{d-1})\}.
$$
Each $j$-face $G$ of one of the cubes of $\mathcal{C}_t$ satisfies $L( G)\in \mathcal{D}$. 
For $L\in G(d,j)$, we denote by $L\vert e_d^\perp$ the orthogonal projection of $L$ to $e_d^\perp$ and define 
$$
A:= \{ L\in G(d,j):\|e_d\vert L\|\le \sqrt{2}^{-1},L\vert e_d^\perp\in\mathcal{D}\}.
$$
It is easy to see that $A$ is a closed subset of $G(d,j)$. 
Let $\tilde\nu_j$ denote an arbitrary finite $SO(e_d)$-invariant measure on $G(d,j)$. We show that $\tilde\nu_j(A)=0$. To see this, let $\nu^{e_d}$ denote the 
unique Haar probability measure on $SO(e_d)$. Then, by the assumed invariance we have
\begin{align*}
\tilde\nu_j(A)&=\int_{G(d,j)}\int_{SO(e_d)}\mathbf{1}\{\vartheta L\in A\}\, \nu^{e_d}(d\vartheta)\, \tilde\nu_j(dL)\\
&\le\int_{G(d,j)}\int_{SO(e_d)} \mathbf{1}\{(\vartheta L)\vert e_d^\perp\in \mathcal{D}\}\, \nu^{e_d}(d\vartheta)\, \tilde\nu_j(dL)\\
&=\int_{G(d,j)}\int_{SO(e_d)} \mathbf{1}\{\vartheta (L\vert e_d^\perp)\in \mathcal{D}\}\, \nu^{e_d}(d\vartheta)\,\tilde\nu_j(dL).
\end{align*}
Since $j\in\{1,\ldots,d-2\}$ and $\mathcal{D}$ is finite, we clearly have
$$
\int_{SO(e_d)} \mathbf{1}\{\vartheta (L\vert e_d^\perp)\in \mathcal{D}\}\, \nu^{e_d}(d\vartheta)=0,
$$
which implies the required assertion. 

Thus it remains to be shown that $\tilde\htau (A)>0$. 

Let $A_t$, for $t\in (0,1/8)$, be the set of $j$-dimensional subspaces parallel to some $j$-face of $P_t$ having non-empty intersection with  $B_0:=\{x\in\R^{d-1}:\|x\|\le 1/4\}\times [0,\infty)$. Since the orthogonal projection to $\R^{d-1}$ of each such $j$-face of $P_t$ is a 
$j$-face of one of the cubes of $\mathcal{C}_t$ and each such $j$-face of $P_t$ is obtained by vertically lifting a $j$-face of one of the cubes of $\mathcal{C}_t$ (see \cite{HS} 
for a detailed argument and \cite[Proposition 7.17]{JT2013} for the main fact on which the argument is based), it follows that $A_t\subset A$ once we have shown that $\|e_d\vert L\|\le \sqrt{2}^{-1}$ for all $L\in A_t$. 
In fact, if $u\in L\cap S^{d-1}$ and $L\in A_t$, then there are $x_1,x_2\in \R^{d-1}$ with $x_1\neq x_2$, $\|x_i\|\le 1/2$, for $i=1,2$, and such that 
$$
u=\frac{e_d+x_2-\|x_2\|^2e_d-[e_d+x_1-\|x_1\|^2e_d]}{\|e_d+x_2-\|x_2\|^2e_d-[e_d+x_1-\|x_1\|^2e_d]\|}.
$$
Then we get
\begin{equation}\label{eqa}
|\langle e_d,u\rangle|\le \frac{|\|x_2\|^2-\|x_1\|^2|}{\sqrt{(\|x_2\|^2-\|x_1\|^2)^2+\|x_2-x_1\|^2}}.
\end{equation}
Since $\|x_i\|\le 1/2$, $i=1,2$, we have $\|x_1\|+\|x_2\|\le 1$, and thus
\begin{equation}\label{eqb}
(\|x_2\|^2-\|x_1\|^2)^2\le (\|x_2\|-\|x_1\|)^2\le \|x_2-x_1\|^2.
\end{equation}
From \eqref{eqa} and \eqref{eqb}, we deduce that
$$
|\langle e_d,u\rangle|\le \frac{|\|x_2\|^2-\|x_1\|^2|}{\sqrt{2(\|x_2\|^2-\|x_1\|^2)^2}} = \frac{1}{\sqrt{2}},
$$
which yields the required assertion.

Then we obtain  
\begin{align*}
&\htau_j(P_t,\{(u,L^\perp)\in S^{d-1}\times G(d,d-j):L\in A\})\\
&\quad \ge \htau_j(P_t,\{(u,L^\perp)\in S^{d-1}\times G(d,d-j):L\in A_t\})\\
&\quad= \sum_{F\in \mathcal{F}_j(P_t)}V_j(F)\int_{n(P_t,F)}\mathbf{1}\{L(F)\in A_t\}\,\HH^{d-1-j}(du)\\
&\quad\ge  \sum_{F\in \mathcal{F}_j(P_t)}\mathcal{H}^j(F\cap B_0)\HH^{d-1-j}(n(P_t,F))\\
&\quad= \binom{d-1}{j} C_j(P_t,B_0)\to \binom{d-1}{j} C_j(K,B_0)>0,
\end{align*}
as $t\to 0^+$. Here $C_j(M,\cdot)$ denotes the $j$-th curvature measure of a convex body $M$ in $\R^d$ (see \cite[Section 4.2]{S} or  \cite[(14.13)]{SW}). 
Note that the convergence holds, since the boundary of the half cylinder $B_0$ has $C_j(K,\cdot )$-measure 0. Since 
$\{(u,L^\perp)\in S^{d-1}\times G(d,d-j):L\in A\}$ is a closed subset of $S^{d-1}\times G(d,d-j)$ and $\htau_j(P_{t},\cdot )\to \htau$ weakly, as $t\to 0^+$, we obtain that
\begin{align*}
\tilde\htau(A)&=\htau(\{(u,L^\perp)\in S^{d-1}\times G(d,d-j):L\in A\}) \\
&\ge \limsup_{t\to 0^+}\htau_j(P_t,\{(u,L^\perp)\in S^{d-1}\times G(d,d-j):L\in A\})\\
&\ge \binom{d-1}{j} C_j(K,B_0)>0,
\end{align*}
which completes the argument.

The required sequences are then obtained by choosing $t=1/k$, $k\in\N$.
\end{proof}

\section{Flag measures for convex bodies}

We now introduce various sequences of natural flag measures for arbitrary convex bodies $K\in {\cal K}$. 
For $j\in\{0,\ldots,d-1\}$, these measures will be defined on the flag manifold $F(d,d-j)$, on the manifold
$$
F^\bot (d,j):=\{ (u,L) \in S^{d-1}\times G(d,j) : L\perp u\} ,
$$
or on $N(d,j) := \R^d\times F^\bot(d,j)\subset \R^d\times S^{d-1}\times G(d,j)$. In the latter case, 
 the measures will be concentrated on a generalization of the normal bundle $\Nor K$ of $K$, that is, on the set
$$
\Nor_j (K) := \{(x,u,L) \in N(d,j) : (x,u)\in \Nor K  \} .
$$
Recall that if $h(K,u):=\max\{\langle x,u\rangle:x\in K\}$ denotes the support function $h(K,\cdot)$ of $K$ evaluated at $u\in\R^d$, 
then $\Nor K:=\{(x,u)\in K\times S^{d-1}:\langle x,u\rangle=h(K,u)\}$ is the normal bundle of $K$. 

We start with polytopes and give a direct definition of a flag measure in the spirit of \eqref{firstflagmeasure}. 
Namely, let $\psi_j(P,\cdot)$ be the measure on $F(d,d-j)$ given by
\begin{align}\label{secondflagmeasure}
\psi_j(P,\cdot ) :=& \sum_{F\in{\cal F}_j(P)} V_j(F) \int_{n(P,F)} 
\int_{G(\langle u\rangle, d-j)} [L,F]^2\,{\bf 1}\{ (u,L)\in \cdot\} 
\notag\\
&\qquad  \ \nu_{d-j}^{\langle u\rangle} (dL) \,\HH^{d-j-1}(du) ,
\end{align}
where we simply write $[L,F]$ instead of $[L,L(F)]$. 
Moreover, here and in the following, for a given subspace $U\in G(d,l)$, we write $G(U,k)$ for the set of all $L\in G(d,k)$ with $U\subset L$ if $l\le k$, 
and with $L\subset U$ if $k\le l$. The corresponding Haar probability measure on $G(U,k)$ is denoted by $\nu_k^U$. (If $U=\R^d$, then the upper index is omitted.) 
A comparison of \eqref{secondflagmeasure} with \eqref{firstflagmeasure} shows that  
$$
\psi_j(P,\cdot ) = T_{j} \htau_j(P,\cdot ),
$$
that is, $\psi_j(P,\cdot )$ is the image measure 
of $\htau_j(P,\cdot )$ under the integral transform $T_{j}$ given by \eqref{integraltransform} (since $T_j$ is apparently self-adjoint, it can be extended to a linear transform on measures, by duality).

As we shall show in this section, the measure $\psi_j(P,\cdot )$ has a continuous extension to all convex bodies. This also implies that the transform $T_{j}$ is not injective. Namely, for the sequences $P_k\to K$, $Q_k\to K$, $k\in\N$, considered in Theorem 2.1, we saw in the proof that $\htau_j (P_k,\cdot) \to \htau$, $\htau_j (Q_k,\cdot ) \to \htau'$ with $\htau\not=\htau'$. Since $T_{j} : C(F(d,d-j))\to C(F(d,d-j))$ is continuous, this implies 
$$\psi_j(P_k,\cdot ) = T_{j}  \htau_j(P_k,\cdot ) \to T_{j}  \htau$$
 and 
 $$\psi_j(Q_k,\cdot ) = T_{j}  \htau_j(Q_k,\cdot ) \to T_{j}  \htau',$$ 
 hence
$$
\psi_j(K,\cdot ) =  T_{j}  \htau =  T_{j} \htau' ,
$$
but $\htau\not = \htau'$.

In order to show that $\psi_j(P,\cdot )$ has a continuous extension to all convex bodies, we introduce a more general framework. We consider, for $k\in\{0,\ldots,d-1\}$, the affine Grassmannian $A(d,k)$ of $k$-flats in $\R^d$, together with the (suitably normalized) motion invariant measure $\mu_k$ on $A(d,k)$. Clearly, $A(d,k)$ can be represented as
\begin{equation}\label{repr}
A(d,k) = \{ \rho (L_0 + x) : x\in L_0^\perp, \rho \in SO_d\} ,
\end{equation}
where $L_0\in G(d,k)$ is a fixed subspace, and then $\mu_k$ can be defined by
\begin{align*}
\mu_k(\cdot) :=\ & \int_{SO_d}\int_{L_0^\perp} {\bf 1}\{ \rho (L_0+x)\in \cdot\} \,\HH^{d-k}(dx)\,\nu(d\rho ) \\
=\ &\int_{G(d,k)}\int_{L^\perp} {\bf 1}\{ L+x\in \cdot\} \,\HH^{d-k}(dx)\,\nu_k(dL ),
\end{align*}
where $\nu$ denotes the Haar probability measure on $SO_d$. 
We refer to \cite[Sections 13.1 and 13.2]{SW} for further details. 
For $K\in\K$ and $E\in A(d,k)$, we define the distance 
$$d(K,E):=\min\{\|x-y\|:x\in K,y\in E\}.
$$ 
If there is a unique pair of 
points $(x,y)\in K\times E$ with $d(K,E)=\|x-y\|$, then we put $p(K,E):=x$ and $l(K,E):=y$. In this case, $p(K,E)$ is the unique nearest 
point of $K$ to $E$, which is called the {\em metric projection} of $l(K,E)$ to $K$. Note that in these definitions we do not require that $K\cap E=\emptyset$. If $K\cap E=\emptyset$ and if the distance $d(K,E)>0$ is realized by a unique pair of points, then we define 
$$
u(K,E):=\frac{l(K,E)-p(K,E)}{d(K,E)}.
$$
(In fact, for the definition of $u(K,E)$ the uniqueness of the pair of points is not needed, any pair of distance minimizing points could be used.) Let $L=L(E)\in G(d,k)$  denote the linear subspace parallel to $E\in A(d,k)$. The set
$$
K^{(k)} := \{ E\in A(d,k) : E\cap K=\emptyset, p(K,E)\ {\rm exists}\}.
$$ 
is Borel measurable.  
This can be shown by the methods provided in \cite[Section 12 and 13.2]{SW}. 
In the following, all sets encountered are Borel measurable (which will be used without further mentioning it). 

\begin{lemma}\label{L21}
Let $K\in\K$ and $k\in\{0,\ldots,d-1\}$. Then $E\in K^{(k)}$ for $\mu_k$-almost all $E\in A(d,k)$ with $E\cap K=\emptyset$. 
Further, if $E\in K^{(k)}$, then  $\pi(E):=(p(K,E),u(K,E),L(E))\in \Nor_k(K)$, and the mapping $\pi$ is continuous on 
$K^{(k)}$.
\end{lemma}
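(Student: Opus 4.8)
The plan is to prove the three assertions in turn, relying throughout on the two standard facts about metric projections to convex bodies: for every $y\in\R^d$ there is a unique nearest point $p(K,y)\in K$, and the map $y\mapsto p(K,y)$ is continuous (indeed $1$-Lipschitz). For a flat $E$ with $E\cap K=\emptyset$, the function $y\mapsto\|y-p(K,y)\|$ is continuous on $E$ and tends to $+\infty$ as $\|y\|\to\infty$ along $E$, so it attains its minimum; the issue is only uniqueness of the minimizing pair.

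For the first (measure-theoretic) assertion I would argue as follows. Fix $E\in A(d,k)$ with $E\cap K=\emptyset$, and note that a minimizing point $l\in E$ is characterized by $p(K,l)-l\perp L(E)$, i.e.\ $l$ is the nearest point of $E$ to $p(K,l)$. If there were two distinct minimizing pairs $(x_1,l_1),(x_2,l_2)$, then $x_1\ne x_2$ (since $x_i$ is the nearest point of $K$ to $l_i$, and distinct $l_i$ with the same nearest point would give different distances only if\dots); more robustly, one sees that the segment $[x_1,x_2]\subset K$ together with $[l_1,l_2]\subset E$ produces a nonzero vector $l_1-l_2$ lying in $L(E)$ and, by a convexity/parallelogram computation, also orthogonal to $L(E)$ unless $x_1=x_2$, a contradiction; hence the minimizing $l$ is unique and then so is $x=p(K,l)$, \emph{provided} the common perpendicular direction is uniquely determined. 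The non-uniqueness set is therefore contained in the set of flats $E$ whose minimizing point $l$ lies in a face of the "distance function" of positive codimension; parametrizing $A(d,k)$ as in \eqref{repr} by $(L,x)\in G(d,k)\times L^\perp$ via $E=L+x$, the bad set projects, for each fixed $L$, into the set of $x\in L^\perp$ for which the nearest-point map of the convex body $K|L^\perp$ (the image of $K$ under projection onto $L^\perp$) is non-differentiable, or the realizing point is non-unique. This set has $\HH^{d-k}$-measure zero on $L^\perp$, by the a.e.\ differentiability of convex functions (Reidemeister/Rademacher), hence by Fubini it is $\mu_k$-null. The main obstacle is precisely this step: making the reduction to a.e.\ differentiability of a convex function clean, i.e.\ showing the bad set is contained in a countable union of graphs of Lipschitz maps or in the non-differentiability locus of the distance-to-$K$ function restricted to parallel affine flats.

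For the second assertion, let $E\in K^{(k)}$. By construction $p:=p(K,E)\in K$, $l:=l(K,E)\in E$, $d(K,E)=\|l-p\|>0$, and $u:=u(K,E)=(l-p)/\|l-p\|\in S^{d-1}$. Since $l$ is the nearest point of $E$ to $p$, the vector $l-p$ is orthogonal to $L(E)$, so $u\perp L(E)$ and hence $(u,L(E))\in F^\perp(d,k)$, giving $(p,u,L(E))\in N(d,k)$. It remains to check $(p,u)\in\Nor K$: this is exactly the statement that $p$ is the nearest point of $K$ to some point $p+\lambda u$ with $\lambda>0$, which holds with $\lambda=d(K,E)$ because $l=p+d(K,E)u$ and $p(K,l)=p$ by definition of $p(K,E)$. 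Therefore $\pi(E)=(p,u,L(E))\in\Nor_k(K)$.

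For continuity of $\pi$ on $K^{(k)}$, suppose $E_i\to E$ in $A(d,k)$ with all $E_i,E\in K^{(k)}$. Then $L(E_i)\to L(E)$ in $G(d,k)$, and since $d(K,\cdot)$ is continuous in the flat, $d(K,E_i)\to d(K,E)>0$. The points $l(K,E_i)$ stay in a bounded region (their distances to $K$ are bounded and $K$ is compact), so along any subsequence $l(K,E_{i'})\to l^*\in E$ and correspondingly $p(K,E_{i'})=p(K,l(K,E_{i'}))\to p(K,l^*)=:p^*$ by continuity of the metric projection onto $K$. Passing to the limit in $\|l(K,E_{i'})-p(K,E_{i'})\|=d(K,E_{i'})$ gives $\|l^*-p^*\|=d(K,E)$, and since $l^*\in E$, $p^*\in K$, the pair $(p^*,l^*)$ realizes the distance $d(K,E)$; by the uniqueness built into the definition of $K^{(k)}$ we get $p^*=p(K,E)$, $l^*=l(K,E)$. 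As every subsequence has a further subsequence converging to the same limit, $p(K,E_i)\to p(K,E)$ and $l(K,E_i)\to l(K,E)$, whence $u(K,E_i)\to u(K,E)$ and finally $\pi(E_i)\to\pi(E)$. This completes the proof.
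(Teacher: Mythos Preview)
Your arguments for the second and third assertions are correct and follow essentially the same route as the paper (compactness plus uniqueness of the minimizing pair).

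For the first assertion there is a genuine gap. The claimed ``convexity/parallelogram computation'' yielding $l_1-l_2\perp L(E)$ when $x_1\ne x_2$ is false. Equality in the triangle inequality, applied to convex combinations of the two minimizing pairs, forces $x_1-l_1=x_2-l_2$ (two positively proportional vectors of equal norm), hence $x_1-x_2=l_1-l_2\in L(E)$: the segment $[x_1,x_2]\subset\partial K$ is \emph{parallel} to $E$, and no contradiction arises. Non-uniqueness of the minimizing pair genuinely occurs; the correct conclusion of this computation is only that every bad flat is parallel to some line segment lying in $\partial K$.

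Your fallback Fubini argument via $K|L^\perp$ is also not correctly set up. For fixed $L$ and $x\in L^\perp\setminus (K|L^\perp)$, the metric projection $p(K|L^\perp,x)$ is always unique, and a.e.\ differentiability of convex functions is a red herring here; the bad $x$ are exactly those for which the fibre $\{y\in K:y|L^\perp=p(K|L^\perp,x)\}$ is a nondegenerate segment, and this set of $x$ can have full Lebesgue measure in $L^\perp\setminus (K|L^\perp)$ (take $K$ a cylinder whose axis direction lies in $L$). So one cannot conclude that the inner integral vanishes for every $L$, and the appeal to Rademacher does not close the gap.

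The paper's argument is shorter and avoids these difficulties: it records the correct geometric consequence of non-uniqueness (the bad flat is parallel to some segment in $\partial K$) and then invokes \cite[Corollary~2.3.11]{S} directly for the $\mu_k$-nullity of the exceptional set.
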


\begin{proof} We first notice, that
\begin{equation}\label{nullset}
\mu_k(\{ E\in A(d,k) : E\cap K=\emptyset, E\notin K^{(k)}\} )=0 .
\end{equation}
In fact, any flat $E\in A(d,k)\setminus K^{(k)}$ with $E\cap K=\emptyset$ is parallel to some line segment in the boundary of $K$. Then, \eqref{nullset} follows from \cite[Corollary 2.3.11]{S}.

For $E\in  K^{(k)}$, the fact that $\pi(E)\in \Nor_k(K)$ is obvious. It remains to show that $\pi$ is continuous. We start with the continuity of the map $\K\times A(d,k)\to [0,\infty)$, $(K,E)\mapsto d(K,E)$. For this, let $K_i,K\in\K$, $i\in\N$, with $K_i\to K$ in the Hausdorff metric. Further, let $L_0\in G(d,k)$, $x_i,x_0\in L^\perp_0$ and $\rho_i,\rho_0\in SO_d$, for $i\in\N$, such that $\rho_i\to\rho_0$ and $x_i\to x_0$, as $i\to\infty$. By the choice of the topology on $A(d,k)$ 
it is sufficient to show that $d(K_i,\rho_i(L_0+x_i))\to d(K,\rho_0(L_0+x_0))$, as $i\to\infty$. But this follows from the estimate
\begin{align*}
\left|d(K_i,\rho_i(L_0+x_i))-d(K,\rho_0(L_0+x_0))\right|&=\left|d(\rho_i^{-1}K_i-x_i,L_0))-d(\rho_0^{-1}K-x_0,L_0)\right|\\
&\le {d}_H(\rho_i^{-1}K_i-x_i,\rho_0^{-1}K-x_0).
\end{align*}

Next we show that $p(K,\cdot)$ is continuous on $K^{(k)}$. Due to the compactness of $K$, it suffices to show that any accumulation point $x$ of a sequence $p(K,E_i)$, $i\in\N$, with $ E_i\to E$ and $E_i,E\in K^{(k)}$, coincides with $p(K,E)$. In fact, let $p(K,E_{i_j})\to x\in K$, for some subsequence $(E_{i_j})_{j\in\N}$. Since $d(p(K,E_{i_j}),E_{i_j}) \le d(p(K,E),E_{i_j})$ implies that $d(x,E)\le d(K,E)$, the uniqueness of $p(K,E)$ shows that $x=p(K,E)$.

The continuity of $l(K,\cdot)$ on $K^{(k)}$ follows in a similar way. Then this also implies the continuity of $u(K,\cdot)$.

The continuity of $E\mapsto L(E)$ is an easy consequence of the definition of the topologies on $A(d,k)$ and $G(d,k)$ (cf.~\cite[p.~582]{SW}).
\end{proof}

By similar arguments as in the proof of Lemma \ref{L21}, one can show the continuity of $p(\cdot ,E), l(\cdot,E)$ and $u(\cdot, E)$, if $E$ is an appropriate flat. We state the result without proof.

\begin{lemma}\label{L24}
Let $K_i,K\in\K$ be convex bodies with $K_i\to K$, as $i\to\infty$. Let
$$
E\in K^{(k)}\cap\bigcap_{i=1}^\infty K_i^{(k)} .
$$
Then $p(K_i,E)\to p(K,E)$, $l(K_i,E)\to l(K,E)$ and $u(K_i,E)\to u(K,E)$, as $i\to\infty$.
\end{lemma}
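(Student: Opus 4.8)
The plan is to follow the pattern of the proof of Lemma~\ref{L21}, but now varying the convex body while keeping the flat $E$ fixed. First I would record that, since $E\in K^{(k)}$ and $E\in K_i^{(k)}$ for every $i$, the flat $E$ is disjoint from the compact sets $K$ and $K_i$, so the distances $d(K,E)$ and $d(K_i,E)$ are all strictly positive, and the distance-minimizing pairs of points are unique; thus $p,l,u$ are well defined for $K$ and for each $K_i$. From the continuity of the map $\K\times A(d,k)\to[0,\infty)$, $(M,F)\mapsto d(M,F)$, established in the proof of Lemma~\ref{L21} (specialised to the fixed flat $F=E$), I obtain $d(K_i,E)\to d(K,E)$ as $i\to\infty$.

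Next I would prove $p(K_i,E)\to p(K,E)$ by an accumulation-point argument. The points $p(K_i,E)\in K_i$ form a bounded sequence, since $K_i\to K$ in the Hausdorff metric, so it suffices to show that every accumulation point equals $p(K,E)$. If $p(K_{i_j},E)\to x$ along a subsequence, then $x\in K$, because $x$ is a Hausdorff limit of points chosen from the $K_{i_j}$ (this is precisely $\limsup_i K_i\subseteq K$ for Hausdorff-convergent sequences). Since $d(\,\cdot\,,E)$ is continuous on $\R^d$ and $d(p(K_{i_j},E),E)=d(K_{i_j},E)$, passing to the limit gives $d(x,E)=d(K,E)$. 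As $x\in K$, this means $x$ realises the distance from the $K$-side, so by uniqueness of the distance-minimizing pair, $x=p(K,E)$; hence the whole sequence converges to $p(K,E)$.

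For $l(K_i,E)$ I would argue similarly. Since $\|l(K_i,E)-p(K_i,E)\|=d(K_i,E)$ is bounded and $p(K_i,E)$ is bounded, the sequence $l(K_i,E)$ is bounded as well, so it has accumulation points. Any accumulation point $y$ lies in the closed flat $E$, and passing to a subsequence along which $l(K_{i_j},E)\to y$ and (using the previous step) $p(K_{i_j},E)\to p(K,E)$, we get $\|y-p(K,E)\|=\lim_j d(K_{i_j},E)=d(K,E)$. Thus $(p(K,E),y)$ is a distance-minimizing pair for $K$ and $E$, and uniqueness forces $y=l(K,E)$, so $l(K_i,E)\to l(K,E)$. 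Finally, $u(K_i,E)=(l(K_i,E)-p(K_i,E))/d(K_i,E)\to(l(K,E)-p(K,E))/d(K,E)=u(K,E)$, using $d(K,E)>0$.

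I do not expect a genuine obstacle here; the only points needing a little care are that the flat $E$ may be unbounded (so one must verify boundedness of the sequences $p(K_i,E)$ and $l(K_i,E)$ rather than invoke compactness of $E$, as was available for the body in Lemma~\ref{L21}), and that a limit of points selected from the $K_i$ indeed lies in $K$, which is exactly the inclusion $\limsup_i K_i\subseteq K$ valid for Hausdorff convergence.
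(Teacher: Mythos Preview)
Your proposal is correct and follows exactly the approach the paper intends: the paper does not spell out a proof of Lemma~\ref{L24} but states explicitly that it is obtained ``by similar arguments as in the proof of Lemma~\ref{L21}'', i.e., the same accumulation-point reasoning with the roles of $K$ and $E$ interchanged. Your remarks on the two points requiring care (boundedness of $p(K_i,E)$ and $l(K_i,E)$ despite $E$ being unbounded, and the inclusion $\limsup_i K_i\subseteq K$ under Hausdorff convergence) are precisely the small adaptations needed.
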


For $\epsilon > 0$, the set
$$
K^{(k)}_{\epsilon} := \{ E\in K^{(k)} : d(K,E)\le \epsilon\}
$$
is Borel measurable. Hence, if $\mu_{k}\fed K^{(k)}_{\epsilon}$ denotes the restriction of $\mu_k$ to $K^{(k)}_{\epsilon}$, then 
the image measure $\mu^{(k)}_{\epsilon}(K,\cdot)$ of 
$\mu_{k}\fed K^{(k)}_{\epsilon}$ under $\pi$ is a finite Borel measure on $N(d,k)\subset \R^d\times S^{d-1}\times G(d,k)$. If we define, for a Borel set $C$ in $N(d,k)$, 
$$
M^{(k)}_\epsilon(K,C) := \{ E\in K^{(k)}_\epsilon : \pi (E)\in C\},
$$
then 
$$
\mu^{(k)}_{\epsilon}(K,C)=\mu_k(M^{(k)}_\epsilon(K,C)).
$$

The following is our main result in this section.

\begin{theorem}\label{steiner} {\bf (a)} For each convex body $K$ in $\R^d$ and each $k\in\{ 0,\ldots,d-1\}$, there exist finite Borel measures $\Theta^{(k)}_0(K,\cdot), \ldots, \Theta^{(k)}_{d-k-1}(K,\cdot)$ on $N(d,k)$, concentrated on $\Nor_k(K)$, such that
\begin{equation}\label{localsteiner}
\mu^{(k)}_{\epsilon}(K,\cdot) = \frac{1}{d-k}\sum_{m=0}^{d-k-1}\epsilon^{d-k-m}\binom{d-k}{ m} \Theta^{(k)}_m(K,\cdot) ,
\end{equation}
for each $\epsilon >0$.

\medskip\noindent
{\bf (b)}
For each $m\in\{ 0,\ldots,d-k-1\}$, the mapping $K\mapsto \Theta^{(k)}_m(K,\cdot)$, from $\K$ to the space of finite 
Borel measures on $N(d,k)$, is weakly continuous and additive, i.e.
$$
\Theta^{(k)}_m(K\cup M,\cdot)+ \Theta^{(k)}_m(K\cap M,\cdot) = \Theta^{(k)}_m(K,\cdot)+\Theta^{(k)}_m(M,\cdot) ,$$
for all convex bodies $K,M$ for which $K\cup M$ is convex.

\medskip\noindent
{\bf (c)}
For each Borel set $B$ in $N(d,k)$, the mapping  $K\mapsto \Theta^{(k)}_m(K,B )$ is measurable.
\end{theorem}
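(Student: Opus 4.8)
The plan is to establish part (a) first for convex polytopes, by an explicit computation, and then to obtain the general case together with parts (b) and (c) from the polynomial structure of \eqref{localsteiner} by approximation. So let $P\in{\cal P}$. By Lemma~\ref{L21}, for $\mu_k$-almost every flat $E\in A(d,k)$ with $E\cap P=\emptyset$ the nearest point $p(P,E)$ exists and lies in the relative interior of a unique face $G$ of $P$; if $\dim G=m$, then necessarily $m\le d-k-1$, since $u(P,E)\in n(P,G)$, $L(E)\in G(u(P,E)^\perp,k)$ and $L(E)\cap L(G)=\{0\}$, whereas for $m\ge d-k$ every $k$-subspace of $u^\perp$ meets the $m$-plane $L(G)\subseteq u^\perp$. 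Conversely, up to a $\mu_k$-null set, the flats in $K^{(k)}_{\epsilon}$ whose nearest point lies in $\relint(G)$ for a fixed face $G$ with $\dim G=m\le d-k-1$ are precisely those of the form $E=L+p+\delta u$ with $p\in\relint(G)$, $u\in\relint(n(P,G))$, $L\in G(u^\perp,k)$, $L\cap L(G)=\{0\}$ and $\delta\in(0,\epsilon]$, and for these $p$ is indeed the unique nearest point, which follows from the elementary estimate $d(q,E)^2\ge\delta^2$ for all $q\in P$, with equality only for $q=p$. A Blaschke--Petkantschin-type disintegration of $\mu_k$ in the coordinates $(p,u,L,\delta)$ then shows that, restricted to these flats, $\mu_k$ has a density equal to a constant (depending only on $m$, $d$ and $k$) times $\delta^{d-k-m-1}$ with respect to $\HH^m(dp)\otimes\HH^{d-m-1}(du)\otimes\nu_k^{u^\perp}(dL)\otimes d\delta$; here the exponent is pinned down by the scaling behaviour of $\mu_k$, and that the constant does not depend on the face $G$ is the main point to check. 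Integrating out $\delta\in(0,\epsilon]$ and summing over the $m$-faces of $P$ yields \eqref{localsteiner} for $P$, with $\Theta^{(k)}_m(P,\cdot)$ a finite Borel measure concentrated on $\Nor_k(P)$. Moreover the resulting explicit expression exhibits $\Theta^{(k)}_m(P,\cdot)$ as the image of a constant multiple of the classical $m$-th support measure $\Lambda_m(P,\cdot)$ of $P$ (see \cite[Section~4.2]{S}) under a fixed Markov kernel independent of $P$, namely the one sending $(x,u)\in\Nor P$ to the image of $\nu_k^{u^\perp}$ under $L\mapsto(x,u,L)$. In particular, since support measures of polytopes are additive, $P\mapsto\Theta^{(k)}_m(P,\cdot)$ is additive on ${\cal P}$.

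To pass to an arbitrary $K\in\K$, the key intermediate step is the weak continuity on $\K$, for each fixed $\epsilon>0$, of $K\mapsto\mu^{(k)}_{\epsilon}(K,\cdot)$: if $K_i\to K$, then $d(K_i,E)\to d(K,E)$ for every $E$, and, by Lemma~\ref{L21} and \cite[Corollary~2.3.11]{S}, the set of flats meeting $K$, parallel to a boundary segment of some $K_i$, or at distance exactly $\epsilon$ from $K$ is $\mu_k$-null; off this set, $\mathbf{1}\{E\in (K_i)^{(k)}_{\epsilon}\}\to\mathbf{1}\{E\in K^{(k)}_{\epsilon}\}$, and $(p(K_i,E),u(K_i,E),L(E))\to(p(K,E),u(K,E),L(E))$ by Lemma~\ref{L24}, so bounded convergence (the relevant flats all lying in the $\mu_k$-finite set $\{E:d(K,E)\le\epsilon+\sup_i d_H(K_i,K)\}$) gives $\int f\,d\mu^{(k)}_{\epsilon}(K_i,\cdot)\to\int f\,d\mu^{(k)}_{\epsilon}(K,\cdot)$ for every $f\in C(N(d,k))$. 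Now fix such an $f$ and pairwise distinct $\epsilon_1,\dots,\epsilon_{d-k}>0$. For a polytope $P$, \eqref{localsteiner} shows that $\epsilon\mapsto\int f\,d\mu^{(k)}_{\epsilon}(P,\cdot)$ is a polynomial of degree at most $d-k$ vanishing at $0$, with coefficients $\int f\,d\Theta^{(k)}_m(P,\cdot)$; inverting the associated Vandermonde system expresses each $\int f\,d\Theta^{(k)}_m(P,\cdot)$ as a fixed ($P$-independent) linear combination of the $\int f\,d\mu^{(k)}_{\epsilon_\ell}(P,\cdot)$, $\ell=1,\dots,d-k$. Choosing polytopes $P_i\to K$, the right-hand side converges; the limit functional $f\mapsto\lim_i\int f\,d\Theta^{(k)}_m(P_i,\cdot)$ is linear and non-negative, hence equals integration against a finite Borel measure, which we define to be $\Theta^{(k)}_m(K,\cdot)$. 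Passing to the limit in the polynomial identity for the $P_i$ yields \eqref{localsteiner} for $K$ (so that $\mu^{(k)}_{\epsilon}(K,\cdot)$ is itself polynomial in $\epsilon$), and the same Vandermonde inversion, applied now with $K$ in place of $P$, shows both that $\Theta^{(k)}_m(K,\cdot)$ does not depend on the approximating sequence and that it is weakly continuous in $K$ --- the continuity assertion in (b). Finally, $\Theta^{(k)}_m(K,\cdot)$ is concentrated on $\Nor_k(K)$ because $\Nor_k(K)$ is closed, $\mu^{(k)}_{\epsilon}(K,\cdot)$ is carried by it, and \eqref{localsteiner} applied to the open complement forces each coefficient measure to vanish there.

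For the additivity in (b), let $K,M\in\K$ with $K\cup M$ convex. A standard approximation argument (cf.~\cite{S}) produces polytopes $P_i\to K$ and $Q_i\to M$ with $P_i\cup Q_i$ convex (hence a polytope), $P_i\cap Q_i\to K\cap M$ and $P_i\cup Q_i\to K\cup M$; since $\Theta^{(k)}_m(\cdot,\cdot)$ is additive on ${\cal P}$ and weakly continuous, the valuation identity passes to the limit. For (c), fix a Borel set $B\subseteq N(d,k)$. Taking $B=N(d,k)$ in \eqref{localsteiner} shows that the total mass $K\mapsto\Theta^{(k)}_m(K,N(d,k))$ is continuous, and, together with the weak continuity of $K\mapsto\Theta^{(k)}_m(K,\cdot)$ and the portmanteau theorem, this shows that $K\mapsto\Theta^{(k)}_m(K,U)$ is lower semicontinuous, hence Borel measurable, for every open $U\subseteq N(d,k)$. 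The family of Borel sets $B$ for which $K\mapsto\Theta^{(k)}_m(K,B)$ is Borel measurable therefore contains the open sets, is closed under complementation (by continuity of the total mass) and under countable disjoint unions (by monotone convergence), and hence equals the full Borel $\sigma$-algebra on $N(d,k)$.

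I expect the principal obstacle to lie in the polytope case of (a): setting up the disintegration of $\mu_k$ correctly --- verifying the exponent $\delta^{d-k-m-1}$ and that the accompanying constant depends only on $m$, $d$ and $k$ --- and disposing carefully of the exceptional $\mu_k$-null families of flats. Once this is done, the extraction of the coefficient measures via the Vandermonde system and the approximation arguments behind (b) and (c) are the standard machinery familiar from the theory of support measures of convex bodies.
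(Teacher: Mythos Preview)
Your overall architecture---polytope case first, then Vandermonde inversion and weak continuity of $K\mapsto\mu^{(k)}_\epsilon(K,\cdot)$ to extend---matches the paper's. The weak-continuity argument for $\mu^{(k)}_\epsilon$ and the inversion step are fine and essentially identical to Lemmas~\ref{l1} and~\ref{l3}. But there is a genuine error in your polytope computation.

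In your parametrization $E=L+p+\delta u$ with $p\in G$, $u\in n(P,G)\subset S(L(G)^\perp)$, $L\in G(u^\perp,k)$, the pullback of $\mu_k$ does \emph{not} have density equal to a constant times $\delta^{d-k-m-1}$. Two Jacobian factors appear: one from replacing $\HH^m(dy_1)$ on $G|L^\perp$ by $\HH^m(dp)$ on $G$ (this contributes $|\langle L(G),L^\perp\rangle|$), and a second one from swapping the order of integration from $\int_{G(d,k)}\int_{n(P,G)\cap L^\perp}$ to $\int_{n(P,G)}\int_{G(u^\perp,k)}$ (another factor $|\langle L(G),L^\perp\rangle|$; this is the Arbeiter--Z\"ahle formula \cite{AZ91}, used in the proof of Theorem~\ref{altrepr}). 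The correct density in your coordinates is therefore
\[
\frac{\omega_{d-k}}{\omega_d}\,|\langle L(G),L^\perp\rangle|^{2}\,\delta^{\,d-k-m-1},
\]
which depends on the relative position of $L$ and $L(G)$. In particular, $\Theta^{(k)}_m(P,\cdot)$ is \emph{not} the image of the classical support measure $\Lambda_m(P,\cdot)$ under the Markov kernel $(x,u)\mapsto\nu_k^{u^\perp}$; the weight $|\langle F^\perp,L\rangle|^2$ is exactly what makes the flag measures carry more information than the support measures. The paper avoids this pitfall by using the simpler parametrization $L\in G(d,k)$ first, then $y=y_1+y_2\in L^\perp$ with the orthogonal splitting $L^\perp=\bigl((L(G)+L)\cap L^\perp\bigr)\oplus\bigl(L(G)^\perp\cap L^\perp\bigr)$; no change of order is needed and the $\delta$-power drops out directly (Lemma~\ref{l2}).

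This error also undermines your additivity argument, which rested on the kernel representation. Moreover, approximating a pair $K,M$ with $K\cup M$ convex by polytopes $P_i,Q_i$ with $P_i\cup Q_i$ convex is not a routine fact. The paper instead proves additivity of $K\mapsto\mu^{(k)}_\epsilon(K,\cdot)$ directly for all convex bodies (Lemma~\ref{l1}, via a nearest-point case analysis), and additivity of $\Theta^{(k)}_m$ then follows immediately from the Vandermonde inversion \eqref{localsteiner3}. Your measurability argument for (c) is correct, though the paper simply quotes \cite[Lemma~12.1.1]{SW}.
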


We call $\Theta^{(k)}_m(K,\cdot)$ the {\em $m$-th $k$-flag support measure} of $K$. Before we start with the proof of Theorem \ref{steiner}, we collect a few remarks. The defining equation \eqref{localsteiner} for the measures $\Theta^{(k)}_m(K,\cdot)$ can be seen as a {\it local Steiner formula} in $A(d,k)$. The investigation of local Steiner formulas for convex bodies is a classical topic by now and the method we employ here has been used first to introduce the {\it curvature measures} and the {\it (surface) area measures} of convex bodies, as well as their common generalization, the {\it support measures}. We refer  to \cite[Chapter 4]{S}, for details. As we shall see later, these classical measures appear as projections of our general ones. Other projections on smaller flag manifolds have been considered before. In \cite{W1, W2}, corresponding {\it (generalized) curvature measures} on the flag manifold
$$
K(d,k) :=\{(x,E) : x\in K, E\in A(d,k) \ {\rm touches}\ K\ {\rm in}\ x\}
$$
were considered and in the diploma thesis \cite{Kr} analogous {\it (generalized) surface area measures} on $F(d,k)$ were studied. The unified approach presented here, leading to the {\it flag support measures} $\Theta^{(k)}_m(K,\cdot)$, follows along similar lines. 
It was already indicated in \cite{Kr} and, in more detail, in \cite{Hind}.   In \cite{HTW}, flag support measures $\Xi^{(k)}_m(K,\cdot)$ are discussed from a different point of view and with a different normalization. A comparison of Theorem \ref{steiner} and of \cite[Theorem 3]{HTW} shows that
\begin{equation}\label{stern}
\Xi^{(k)}_m(K,\cdot)=\frac{\binom{d-k-1}{m}}{\omega_{d-k-m}}\, \Theta^{(k)}_m(K,\cdot).
\end{equation} 
Moreover, in \cite{HTW} mixed flag measures and applications to zonoids are explored. 
A measure geometric approach to flag measures of convex bodies 
and functions is provided in \cite{BHW}.

\medskip

We divide the proof of Theorem \ref{steiner} into several lemmas.

\begin{lemma}\label{l1} For fixed $\epsilon >0$, the map $
K\mapsto \mu^{(k)}_{\epsilon}(K,\cdot)$ 
is weakly continuous and additive. Moreover, for every Borel set $B$ in $N(d,k)$ the 
map $K\mapsto \mu^{(k)}_{\epsilon}(K,B)$ is measurable.
\end{lemma}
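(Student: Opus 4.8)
The plan is to establish the three assertions—weak continuity, additivity, and measurability—in that order, reducing each to a statement about the underlying measure $\mu_k$ on the affine Grassmannian $A(d,k)$ together with the map $\pi$ from Lemma \ref{L21}.

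For weak continuity, I would fix $\epsilon>0$ and a bounded continuous function $g$ on $N(d,k)$, take $K_i\to K$ in the Hausdorff metric, and show $\int g\,d\mu^{(k)}_\epsilon(K_i,\cdot)\to\int g\,d\mu^{(k)}_\epsilon(K,\cdot)$. By the definition of $\mu^{(k)}_\epsilon$ as the image of $\mu_k\fed K^{(k)}_\epsilon$ under $\pi$, the left-hand side equals $\int_{A(d,k)} \mathbf{1}\{E\in K_i^{(k)},\,d(K_i,E)\le\epsilon\}\, g(\pi_i(E))\,\mu_k(dE)$, where $\pi_i(E)=(p(K_i,E),u(K_i,E),L(E))$. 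The idea is to invoke dominated convergence: for $\mu_k$-almost every $E$ with $d(K,E)<\epsilon$ one has $E\in K^{(k)}$ (Lemma \ref{L21}), hence $E\in K_i^{(k)}$ and $d(K_i,E)\le\epsilon$ for all large $i$ (using continuity of $(K,E)\mapsto d(K,E)$ from the proof of Lemma \ref{L21}), and then $\pi_i(E)\to\pi(E)$ by Lemma \ref{L24}, so the integrand converges pointwise a.e. to $\mathbf{1}\{d(K,E)\le\epsilon\}g(\pi(E))$. The set $\{d(K,E)=\epsilon\}$ is a $\mu_k$-null set (the map $E\mapsto d(K,E)$ is, say, locally Lipschitz on the flats meeting a fixed ball, so its level sets are null; alternatively one argues via the Steiner formula for $\mu_k$ of the $\epsilon$-neighbourhood), which handles the boundary. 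A uniform bound on $\mu_k(\{E: d(K,E)\le\epsilon+1\})$ over $i$ large—available since all $K_i$ lie in a fixed ball—supplies the dominating function $\|g\|_\infty\cdot\mathbf{1}\{d(K,E)\le\epsilon+1\}$.

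For additivity, suppose $K\cup M$ is convex. For a flat $E$ disjoint from $K\cup M$ with a unique nearest point, the nearest point of $E$ to $K\cup M$ is the nearer of $p(K,E)$ and $p(M,E)$, and one checks the inclusion–exclusion relation $\mathbf{1}_{M^{(k)}_\epsilon(K\cup M,C)}+\mathbf{1}_{M^{(k)}_\epsilon(K\cap M,C)} = \mathbf{1}_{M^{(k)}_\epsilon(K,C)}+\mathbf{1}_{M^{(k)}_\epsilon(M,C)}$ holds $\mu_k$-a.e. on the set of such $E$; integrating against $\mu_k$ gives the claim. The one subtlety is matching up, for each relevant $E$, the distances and projection data of the four bodies; one splits according to which of $p(K,E),p(M,E)$ is closer and uses that on $K\cap M$ and on the "farther" body the data agree with one of the other two. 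Finally, measurability of $K\mapsto\mu^{(k)}_\epsilon(K,B)$ follows by a monotone-class argument: it holds for $B$ such that $\mathbf{1}_B$ is a bounded continuous function by the weak continuity just proved, and one extends to all Borel $B$ by a Dynkin-system argument, using that $\{E:d(K,E)\le\epsilon\}$ and $\pi$ depend measurably on $K$ (again via the joint continuity established in Lemma \ref{L21}).

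The main obstacle I anticipate is the dominated-convergence step in part one: one must simultaneously control (i) the null set of "bad" flats that are not in $K^{(k)}$, ensuring it does not interfere after perturbing $K$ to $K_i$; (ii) the boundary level set $\{d(K,E)=\epsilon\}$; and (iii) the behaviour of flats $E$ that meet $K$ but just barely miss $K_i$, or vice versa. Points (i) and (iii) are handled together by noting that for $\mu_k$-a.e. $E$ with $d(K,E)<\epsilon$ we have $E\in K^{(k)}$ and, for $i$ large, $E\in K_i^{(k)}$ with $d(K_i,E)<\epsilon$ as well, so the exceptional behaviour is confined to the null set $\{d(K,E)\ge\epsilon\}\cup(\text{null set from \eqref{nullset}})$. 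This localization is the technical heart; the rest is bookkeeping.
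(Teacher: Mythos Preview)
Your proposal is correct and follows essentially the same strategy as the paper. For weak continuity you use dominated convergence against continuous test functions where the paper uses the Portmanteau route (liminf on open sets plus convergence of total mass), but the underlying ingredients---the $\mu_k$-null set of flats parallel to a boundary segment of some $K_i$, the null set $\{E:d(K,E)=\epsilon\}$, and Lemma~\ref{L24} for pointwise convergence of $\pi_i(E)$---are identical; for additivity the paper supplies the geometric detail you flag as the ``one subtlety,'' namely that convexity of $K\cup M$ forces the segment $[p(K,E),p(M,E)]$ into $K\cup M$, and convexity of $t\mapsto d(\cdot,E)$ along it then pins down $p(K\cap M,E)$; and for measurability the paper simply cites \cite[Lemma 12.1.1]{SW}, whose content is exactly your monotone-class argument.
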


\begin{proof} Let $K_i,K$ be convex bodies with $K_i\to K$, as $i\to\infty$, and let $A$ be the set of all $k$-flats that are parallel to a line segment (of positive length) in the boundary of some $K_i$, $i\in\N$. As mentioned in the proof of Lemma \ref{L21}, we have $\mu_k(A)=0$. Let $B\subset N(d,k)$ be open (with respect to the induced subspace topology) and let $E\in M^{(k)}_\epsilon(K,B)\setminus A$  be a flat with $0<d(K,E)<\epsilon$. Then, for almost all $i$, the sets $K_i$ and $E$ do not intersect. Moreover, $d(K_i,E)\to d(K,E)$ and, by Lemma \ref{L24},  $(p(K_i,E),u(K_i,E))\to (p(K,E),u(K,E))$, as $i\to\infty$. It follows that, for almost all $i$,  $d(K_i,E)<\epsilon$ and $(p(K_i,E),u(K_i,E),L(E))\in B$. Thus, for almost all $i$, we have $E\in M^{(k)}_\epsilon(K_i,B)$. 
Hence,  we get
$$
\left(M^{(k)}_\epsilon(K,B)\setminus A\right) \cap \{E\in A(d,k) : d(K,E)<\epsilon\}\subset \liminf_{i\to\infty} M^{(k)}_\epsilon(K_i,B),
$$
and thus
\begin{align*}
\mu^{(k)}_\epsilon (K,B )&= \mu_k(M^{(k)}_\epsilon(K,B))\cr
&= \mu_k(M^{(k)}_\epsilon(K,B) \cap \{E\in A(d,k) : d(K,E)<\epsilon\})\cr
&\le \mu_k\left(\liminf_{i\to\infty} M^{(k)}_\epsilon(K_i,B)\right)\cr
&\le \liminf_{i\to\infty} \mu_k(M^{(k)}_\epsilon(K_i,B))\cr
&= \liminf_{i\to\infty} \mu^{(k)}_\epsilon(K_i,B) .
\end{align*}
Here, we have used Fatou's lemma and, in the second equation, the fact that
$$
\mu_k(\{ E\in G(d,k) : d(K,E)=\epsilon\}) =0,
$$
which follows, for instance, from Crofton's formula (see \cite{SW}).

By the same kind of arguments, we get that
$$
\mu_\epsilon (K_i,N(d,k)) \to \mu_\epsilon (K,N(d,k)),
$$
as $i\to\infty$, which completes the proof of the weak continuity.

Concerning the additivity, we follow the corresponding arguments for support measures (see, e.g., \cite{SW}). Let $K,M$ and $K\cup M$ be  convex bodies, and let $E\in K^{(k)}\cap M^{(k)}$. We put $y=p(K,E), z=p(M,E)$ and consider, first, the case $p(K\cup M)=y$.

As $K\cup M$ is convex, the segment $[y,z]$ lies in $K\cup M$. Therefore, there exists a point $x\in [y,z]\cap K\cap M$. The mapping $t\mapsto d(tz+(1-t)y,E)$ 
is convex on $[0,1]$ and has a minimum at $t=0$. (Here we identify a point $x$ with the set $\{x\}$.) This implies $d(y,E)\le d(x,E)\le d(z,E)$. Since $z=p(M,E)$, we have $d(x,E)\ge d(z,E)$ and thus $d(x,E)=d(z,E)$ (and $x,z\in M$). The uniqueness of the nearest point now implies $z=x\in K\cap M$. We get
$$
d(K\cup M,E)=d(K,E),\quad d(K\cap M,E)= d(M,E)
$$
and 
$$
u(K\cup M,E)=u(K,E),\quad u(K\cap M,E)= u(M,E).
$$
Let $C\subset N(d,k)$ be a Borel set. We obtain that the statements
$$
E\in M^{(k)}_\epsilon(K\cup M,C) \quad {\rm and}\quad E\in M^{(k)}_\epsilon(K,C)
$$
are equivalent, as are the statements
$$
E\in M^{(k)}_\epsilon(K\cap M,C)\quad {\rm and}\quad E\in  M^{(k)}_\epsilon(M,C).
$$

In the other case, $p(K\cup M,E) = z$, we get in a similar way that
$$
E\in  M^{(k)}_\epsilon(K\cup M,C) \quad {\rm and}\quad E\in M^{(k)}_\epsilon(M,C)
$$
are equivalent, as well as
$$
E\in M^{(k)}_\epsilon(K\cap M,C) \quad {\rm and}\quad E\in M^{(k)}_\epsilon(K,C).
$$

This means that, for $\mu_k$-almost all $E$, we have the identity
\begin{align*}
&{\bf 1}\{E\in M^{(k)}_\epsilon(K\cup M,C)\} + {\bf 1}\{E\in M^{(k)}_\epsilon(K\cap M,C)\}\cr
&\hspace{3cm} = {\bf 1}\{E\in M^{(k)}_\epsilon(K,C)\} + {\bf 1}\{E\in M^{(k)}_\epsilon(M,C)\} .
\end{align*}
Integration with respect to $\mu_k$ yields the additivity property.

The remaining assertion is implied by \cite[Lemma 12.1.1]{SW}.
\end{proof}

\begin{lemma}\label{l2} Let $P$ be a polytope. Then there exist finite Borel measures $\Theta^{(k)}_j(P,\cdot)$  on $N(d,k)$, for $j=0,\ldots,d-k-1$, such that
\begin{equation}\label{localsteiner2}
\mu^{(k)}_{\epsilon}(P,\cdot) = \frac{1}{d-k}\sum_{m=0}^{d-k-1}\epsilon^{d-k-m}\binom{d-k}{ m} \Theta^{(k)}_m(P,\cdot) ,
\end{equation}
for each $\epsilon >0$.
\end{lemma}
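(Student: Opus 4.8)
The proof follows the classical derivation of the local Steiner formula for the support measures of a polytope (see \cite[Chapter 4]{S}), the $k$-flats $E\in K^{(k)}_{\epsilon}$ now playing the role of the points in a local parallel set. Since for $E$ in this set the point $p(P,E)$ is the metric projection onto $P$ of the point $l(P,E)\in E\setminus P$, it lies in the relative interior of a unique proper face $F$ of $P$; this gives a disjoint decomposition of the set $\{E\in P^{(k)}:d(P,E)\le\epsilon\}$ over the faces $F\neq P$. Fix such a face $F$, put $m:=\dim F$, and consider the $k$-flats $E$ with $p(P,E)\in\relint F$ and $d(P,E)\le\epsilon$. The plan is to parametrize them by the data $(x,u,r,L)$, where $x=p(P,E)\in\relint F$, $u=u(P,E)\in n(P,F)$, $r=d(P,E)\in(0,\epsilon]$ and $L=L(E)\in G(d,k)$, via $E=(x+ru)+L$. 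To see that this is a bijection onto the set in question, put $M:=L^{\perp}\cap u^{\perp}$ and note that
\[
d\big(z,(x+ru)+L\big)^{2}=\big(\langle z-x,u\rangle-r\big)^{2}+\big\|(z-x)|M\big\|^{2},\qquad z\in\R^{d};
\]
since $u\in N(P,x)$ forces $\langle z-x,u\rangle\le 0$ on $P$, this shows that for any $(x,u,r,L)$ with $u\perp L$ the flat $E=(x+ru)+L$ is disjoint from $P$, has $d(P,E)=r$, and has a unique closest pair of points, equal to $(x,x+ru)$, exactly when $(x+L)\cap P=\{x\}$; in that case $\pi(E)=(x,u,L)$. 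Finally, $(x+L)\cap P=\{x\}$ is equivalent to $L\cap T(P,x)=\{0\}$, where $T(P,x)$ is the tangent cone of $P$ at $x$, and for $u$ in the relative interior of $n(P,F)$, hence for $\HH^{d-m-1}$-almost all $u\in n(P,F)$, one has $T(P,x)\cap u^{\perp}=L(F)$, so that this transversality condition reduces to $L\cap L(F)=\{0\}$.

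Next I would compute the contribution of $F$ to $\mu^{(k)}_{\epsilon}(P,\cdot)$ and push it forward under $\pi$. Writing $\mu_{k}=\int_{G(d,k)}\int_{L^{\perp}}\mathbf 1\{L+y\in\cdot\}\,\HH^{d-k}(dy)\,\nu_{k}(dL)$ and, for fixed $L$, substituting $y=(x|L^{\perp})+ru$ (the projection to $L^{\perp}$ of $l(P,E)=x+ru$, using $u\perp L$), the $(d-k)$-dimensional integral over $y$ splits, with an explicit Jacobian depending only on the mutual position of $L$, $u$ and $L(F)$, into an $\HH^{m}$-integral over $x\in F$, a polar part $r^{d-k-m-1}\,dr$ with $u$ ranging over the spherical set $n(P,F)\cap L^{\perp}$, and harmless normalizing constants. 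Carrying out the $r$-integration gives $\int_{0}^{\epsilon}r^{d-k-m-1}\,dr=\epsilon^{d-k-m}/(d-k-m)$; faces with $m\ge d-k$ contribute nothing, since then $\dim L+\dim L(F)>d-1$ inside $u^{\perp}$ forces $L\cap L(F)\neq\{0\}$. Summing over all faces and reorganizing the integral, by disintegrating $\nu_{k}$ along $G(u^{\perp},k)$ against the surface measure of $n(P,F)$ so that everything becomes an integral over $\Nor_{k}(P)$, exhibits $\mu^{(k)}_{\epsilon}(P,\cdot)$ as a polynomial in $\epsilon$ whose coefficient of $\epsilon^{d-k-m}$ is a finite Borel measure on $N(d,k)$ concentrated on $\Nor_{k}(P)$ (finiteness being immediate from the compactness of $P$ and the boundedness of the Jacobian). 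Declaring $\Theta^{(k)}_{m}(P,\cdot)$ to be this coefficient measure, rescaled by the binomial factor occurring in \eqref{localsteiner2}, then yields \eqref{localsteiner2} for every $\epsilon>0$.

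I expect the Jacobian computation of the second step to be the main obstacle: one has to pin down exactly the density obtained when passing from the $(L,y)$-coordinates on $A(d,k)$ to the $(x,u,r,L)$-coordinates, i.e.\ the precise geometric factor comparing the flag direction $L$ (equivalently $L\oplus\langle u\rangle$) with the linear hull $L(F)$ of the face---this should be the $k$-flag analogue of the factor $|\langle L,F^{\perp}\rangle|^{2}$ occurring in \eqref{secondflagmeasure}---and then to regroup the resulting finite sum so that the coefficient of each power of $\epsilon$ is manifestly a (nonnegative) measure on $\Nor_{k}(P)$ rather than merely a signed set function. The remaining ingredients---bijectivity of the parametrization, the reduction of the transversality condition, and finiteness of the measures produced---are routine.
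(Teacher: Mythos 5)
Your argument is correct and is essentially the paper's own proof: the same decomposition over the faces $F$ with $p(P,E)\in\relint F$, the same Fubini over $L\in G(d,k)$ followed by the orthogonal splitting $L^\perp=(L(F)|L^\perp)\oplus(L(F)^\perp\cap L^\perp)$ of the translation variable, and the same radial integration producing $\epsilon^{d-k-m}/(d-k-m)$ for an $m$-face. The obstacle you anticipate largely dissolves in the paper's formulation: integrating over $y_1\in F|L^\perp$ instead of over $x\in F$ makes the splitting a plain orthogonal Fubini with no Jacobian beyond the polar factor $r^{d-k-m-1}$ (the factor $|\langle F,L^\perp\rangle|$ being absorbed into $\HH^m(F|L^\perp)$, while the squared factor $|\langle F^\perp,L\rangle|^2$ only arises later, in Theorem \ref{altrepr}, when the $L$- and $u$-integrations are interchanged via the Arbeiter--Z\"ahle formula --- an interchange Lemma \ref{l2} does not need), and no regrouping is required since an $m$-face contributes only to the coefficient of $\epsilon^{d-k-m}$ and its contribution is manifestly a nonnegative measure.
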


\begin{proof} For each flat $E\in P^{(k)}$, the nearest point $p(P,E)$ lies in the relative interior of a uniquely determined face $F$ of $P$. For a given face $F$, a Borel set $C$ in $N(d,k)$ and $\epsilon >0$, we compute the measure of the set
$$
A := \{ E\in M^{(k)}_\epsilon(P,C) : p(P,E)\in \relint\, F \} .
$$
If $\dim F \ge d-k$, the nearest point in $F$ to a given $k$-flat $E$ is either not unique or not in the relative interior of $F$ (or $E$ and $F$ intersect). In each of these cases, $E\notin A$, hence $A$ is empty. Therefore, we can concentrate on the case 
$$m:= \dim F \le d-k-1.$$ 
Since $\mu_k$ is translation invariant, we may also assume that $0\in F$ (of course, $C$ also has to be replaced by a corresponding translate). 
Then
\begin{align*}
\mu_k(A) &= \int_{A(d,k)} {\bf 1}\{ E\in P_\epsilon^{(k)}, p(P,E)\in \relint(F), \pi (E)\in C\} \,\mu_k (dE)\cr
& = \int_{G(d,k)} I(L)\,\nu_k (dL),
\end{align*}
where 
$$
I(L):=\int_{L^\bot} {\bf 1}\{ L+y\in P_\epsilon^{(k)},  p(P,L+y)\in \relint(F), \pi (L+y)\in C\}\, 
 \HH^{d-k}(dy).
$$

Next we investigate  $I(L)$.

If $L$ and $F$ are not in general relative position and $p(P,L+y)\in \relint(F)$ holds, then $L+y\notin P^{(k)}$ and thus $I(L)=0$. We therefore can concentrate on those subspaces $L$ which are in general relative position with respect to $F$. In any case, since we integrate 
$L$ over $G(d,k)$ with respect to the Haar measure $\nu_k$, we can always assume that $F$ and $L$ are in general relative position, excluding a subset of $G(d,k)$ of measure zero \cite[Lemma 13.2.1]{SW}. Then, let
$$
L_1:= L(F)|L^\perp = (L(F)+L)\cap L^\perp
$$
and
$$
L_2:= L_1^\perp\cap L^\perp = L(F)^\perp\cap L^\perp .
$$
It follows that $L_1\perp L_2$ and $L^\perp = L_1 \oplus L_2.$ Moreover, for $y_1\in L_1$ and $y_2\in L_2$, we have
$$
(L+y_1+y_2)|L(F) = (L+y_1)|L(F),\quad (L+y_1)|L^\perp = \{y_1\},
$$
and 
$$
u(F,L+y_1+y_2) = \frac{y_2}{\| y_2\|} , \quad d(F,L+y_1+y_2 ) = \| y_2\| ,
$$
whenever $p(F, L+y_1+y_2)\in\relint(F)$. Thus, $p(P,L+y_1+y_2)\in\relint(F)$ is equivalent to $y_1\in \relint(F) |L^\perp, y_2\in N(P,F)$ and, in this case, $0< d(F,L+y_1+y_2)\le\epsilon$ is equivalent to $0< \| y_2\|\le\epsilon$.

We obtain
\begin{align*}
I(L) =& \int_{L_1}\int_{L_2} {\bf 1}\{ L+y_1+y_2\in P_\epsilon^{(k)}, p(P,L+y_1+y_2)\in \relint(F)\}\cr
&\ \times {\bf 1}\{ \pi (L+y_1+y_2)\in C\} \,\HH^{d-k-m} (dy_2)\,\HH^{m}(dy_1)\cr
=& \int_{L_1}\int_{L_2} {\bf 1}\{ L+y_1+y_2\in P^{(k)}, y_1\in \relint(F) |L^\perp, 0<\| y_2\|\le \epsilon\}\cr
&\ \times {\bf 1}\{y_2\in N(P,F), (p(F,L+y_1),y_2/\| y_2\| ,L)\in C\}\,\HH^{d-k-m} (dy_2)\,\HH^{m} (dy_1) .
\end{align*}
If $y_1\in \relint(F)|L^\perp$ and $y_2\in N(P,F)\setminus\{0\}$, then  $L+y_1+y_2\in P^{(k)}$, since $L$ and $F$ are in general relative position.  Hence,
$$
I(L)
= \frac{\epsilon^{d-k-m}}{d-k-m} \int_{F|L^\perp}\int_{L^\perp\cap n(P,F)} {\bf 1}\{ (p(F,L+y),u,L)\in C\}\, \HH^{d-k-m-1} (du)
\,\HH^{m} (dy) ,
$$
which gives us
\begin{align*}
\mu_k(A)
=& \frac{\epsilon^{d-k-m}}{d-k-m}\int_{G(d,k)}\int_{F|L^\perp}\int_{L^\perp\cap n(P,F)} \cr
&\times {\bf 1}\{ (p(F,L+y),u,L)\in C\} \,\HH^{d-k-m-1} (du)\,\HH^{m} (dy)\,\nu_k(dL) .
\end{align*}
The last expression is translation covariant, so we need no longer assume $0\in F$. 
Summing over all $F\in{\cal F}_m(P)$ and all $m=0,\ldots,d-k-1$, we arrive at
\begin{align*}
\mu_\epsilon^{(k)}(P,C)
=& \sum_{m=0}^{d-k-1}\frac{\epsilon^{d-k-m}}{d-k-m}\sum_{F\in {\cal F}_m(P)}\int_{G(d,k)}\int_{F|L^\perp}\int_{L^\perp\cap n(P,F)} \cr
&\times {\bf 1}\{ (p(F,L+y),u,L)\in C\} \,\HH^{d-k-m-1} (du)\,\HH^{m} (dy)\,\nu_k(dL)\cr
 = &\frac{1}{d-k}\sum_{m=0}^{d-k-1}\epsilon^{d-k-m}\binom{d-k}{ m} \Theta^{(k)}_m(P,C) , 
\end{align*}
where 
\begin{align}
\Theta^{(k)}_m(P,C)
:=& \binom{d-k-1}{m}^{-1}\sum_{F\in {\cal F}_m(P)}\int_{G(d,k)}\int_{F|L^\perp}\int_{L^\perp\cap n(P,F)} \label{gensuppmeas}\\
&\times {\bf 1}\{ (p(F,L+y),u,L)\in C\}\, \HH^{d-k-m-1} (du)\,\HH^{m} (dy)\,\nu_k(dL) ,\notag
\end{align}
for $m=0,\ldots,d-k-1$. 
\end{proof}

\begin{lemma}\label{l3} There exist real numbers $a_1(d,k,m),\ldots,a_{d-k}(d,k,m)$, depending only on $d,k$ and $m$, such that
\begin{equation}\label{localsteiner3}
\Theta^{(k)}_m(P,\cdot) =  \sum_{i=1}^{d-k}a_i(d,k,m)\mu^{(k)}_{i}(P,\cdot) ,
\end{equation}
for $m=0,\ldots,d-k-1$ and all polytopes $P$.
\end{lemma}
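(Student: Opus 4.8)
The plan is to exploit the polynomial structure of both sides of \eqref{localsteiner2} in the variable $\epsilon$ and invert a Vandermonde-type linear system. By Lemma~\ref{l2}, for every polytope $P$ the measure-valued map $\epsilon\mapsto\mu^{(k)}_\epsilon(P,\cdot)$ is a polynomial in $\epsilon$ of degree $d-k$ with no constant term, whose coefficients are (up to the explicit factors $\tfrac{1}{d-k}\binom{d-k}{m}$) precisely the measures $\Theta^{(k)}_m(P,\cdot)$, $m=0,\dots,d-k-1$. In other words, for every Borel set $C$ in $N(d,k)$ and every $\epsilon>0$,
$$
\mu^{(k)}_\epsilon(P,C)=\sum_{m=0}^{d-k-1}c_m\,\epsilon^{\,d-k-m}\,\Theta^{(k)}_m(P,C),\qquad c_m:=\frac{1}{d-k}\binom{d-k}{m}.
$$

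First I would evaluate this identity at $d-k$ distinct positive values $\epsilon=1,2,\dots,d-k$ (any fixed choice of $d-k$ distinct positive reals works, and $1,\dots,d-k$ is convenient). This produces, for each fixed $P$ and $C$, a system of $d-k$ linear equations in the $d-k$ unknowns $\Theta^{(k)}_0(P,C),\dots,\Theta^{(k)}_{d-k-1}(P,C)$, with coefficient matrix having entries $c_m\,i^{\,d-k-m}$ for $i=1,\dots,d-k$ and $m=0,\dots,d-k-1$. After factoring out the nonzero constants $c_m$ from the columns and a power of $i$ from each row, this matrix is a classical Vandermonde matrix in the distinct nodes $1,\dots,d-k$, hence invertible. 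Let $\bigl(b_{m,i}(d,k)\bigr)$ denote the entries of the inverse matrix; these are universal constants depending only on $d$, $k$ (and, after reinstating the $c_m$, on $m$). Then
$$
\Theta^{(k)}_m(P,\cdot)=\sum_{i=1}^{d-k}a_i(d,k,m)\,\mu^{(k)}_i(P,\cdot),\qquad a_i(d,k,m):=b_{m,i}(d,k),
$$
which is exactly \eqref{localsteiner3}. Since the $a_i(d,k,m)$ come from inverting a fixed numerical matrix, they depend only on $d,k,m$ and not on $P$, as required.

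There is essentially no obstacle here: the only point that needs a word is that the identity of Lemma~\ref{l2} is an identity of measures, so one should read the above either setwise (fix $C$, solve the scalar system, and observe the solution is the same linear combination of measures for every $C$) or directly as an identity of finite signed measures, the linear combination $\sum_i a_i(d,k,m)\mu^{(k)}_i(P,\cdot)$ being a well-defined finite signed Borel measure on $N(d,k)$. Both sides then agree on all Borel sets $C$, hence as measures. The mild subtlety worth flagging is only bookkeeping: one must keep track of whether the $c_m$ are absorbed into the $a_i$ or kept separate, but either convention yields constants depending solely on $d$, $k$, $m$. This completes the proof of Lemma~\ref{l3}; note that, combined with Lemma~\ref{l1}, it immediately gives the polytope case of parts (b) and (c) of Theorem~\ref{steiner}, since $\Theta^{(k)}_m(P,\cdot)$ is a fixed linear combination of the weakly continuous, additive, measurable maps $\mu^{(k)}_i(\cdot,\cdot)$.
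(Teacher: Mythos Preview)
Your proposal is correct and follows essentially the same approach as the paper: both evaluate the polynomial identity of Lemma~\ref{l2} at $\epsilon=1,\ldots,d-k$ and invert the resulting Vandermonde system to express each $\Theta^{(k)}_m(P,\cdot)$ as a fixed linear combination of the $\mu^{(k)}_i(P,\cdot)$. Your additional remarks on reading the identity setwise versus measure-wise and on absorbing the constants $c_m$ are sound but not needed beyond what the paper records.
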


\begin{proof}
For a polytope $P$ and a Borel set $C\subset N(d,k)$, Lemma \ref{l2} proves the existence of measures $\Theta^{(k)}_0(P,\cdot), \ldots, \Theta^{(k)}_{d-k-1}(P,\cdot)$ such that
\begin{equation*}
\mu^{(k)}_{\epsilon}(P,C) =\sum_{m=0}^{d-k-1} \epsilon^{d-k-m}\left[\frac{1}{d-k}\binom{d-k}{ m} \Theta^{(k)}_m(P,C)\right] ,
\end{equation*}
for all $\epsilon >0$. Choosing $\epsilon = 1,2,\ldots,d-k$, we obtain a system of linear equations for the unknowns
$$
\left[\frac{1}{d-k}\binom{d-k}{ m} \Theta^{(k)}_m(P,C )\right] ,\quad m=0,\ldots,d-k-1,$$
where the coefficient matrix is invertible (the determinant is a Vandermonde determinant) and the entries only depend on $d,k$ and $m$.
Solving the system, we obtain the assertion.
\end{proof}

\begin{proof}[Proof of Theorem \ref{steiner}] (a)  
 We fix $k$. Let $K$ be a convex body. Then we define
 \begin{equation}\label{inversion}
\Theta^{(k)}_m(K,\cdot) :=  \sum_{i=1}^{d-k}a_i(d,k,m)\mu^{(k)}_{i}(K,\cdot) ,\quad m=0,\ldots,d-k-1.
\end{equation}
Clearly, these measures are finite and weakly continuous as functions of $K$, since the measures on the right-hand side have these properties. Lemma \ref{L21} implies that the measures on the right-hand side are concentrated on $\Nor_k(K)$, hence this is also true for the measure on  the left-hand side. 
Moreover, the definition is consistent with the case of polytopes (by Lemma \ref{l3}). Since $\Theta^{(k)}_m(K,\cdot)$ is the 
weak limit of a sequence $\Theta^{(k)}_m(P_n,\cdot)$, $n\in\N$, for a sequence of polytopes $P_n$ with $P_n\to K$, these measures are  non-negative. 
Moreover,  using weak convergence in \eqref{localsteiner2}, we  see that
$$
\mu^{(k)}_{\epsilon}(K,\cdot) = \frac{1}{d-k}\sum_{m=0}^{d-k-1}\epsilon^{d-k-m}\binom{d-k}{ m} \Theta^{(k)}_m(K,\cdot) 
$$
holds for arbitrary $K$ and $\epsilon >0$. 

(b) The fact that $K\mapsto \Theta^{(k)}_m(K,\cdot)$ is weakly continuous has already been shown in (a).  From \eqref{inversion} and Lemma \ref{l1}  we obtain that $\Theta^{(k)}_m(K,\cdot)$ depends additively on $K$. 

(c) This follows from (b) with the help of Lemma 12.1.1 in \cite{SW} (alternatively, from Lemma \ref{l1}). 
\end{proof}

\medskip

The following polynomial expansion of $\Theta^{(k)}_m(K+\epsilon B^d,\cdot)$ follows easily from the defining equation \eqref{localsteiner} and is included here for the sake of completeness. Two different proofs can be found in \cite{Hind} and \cite{HTW}.
 
\begin{theorem}\label{generalsteiner}
Let $k\in\{0,\ldots, d-1\}$, $K\in{\cal K}$, let $C\subset N(d,k)$ be a Borel set, and let $m\in\{0,\ldots, d-k-1\}$. Then
$$
\Theta^{(k)}_m(K+\epsilon B^d,t_\epsilon (C)) = \sum_{j=0}^m \epsilon^j \binom{m}{ j} \Theta^{(k)}_{m-j}(K,C ),
$$
where $t_\epsilon : N(d,k) \to N(d,k), (x,u,L)\mapsto (x+\epsilon u, u, L)$.
\end{theorem}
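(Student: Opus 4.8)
The plan is to reduce the Steiner-type formula for $\Theta^{(k)}_m(K+\epsilon B^d,\cdot)$ to the defining local Steiner formula \eqref{localsteiner} applied to the enlarged body $K+\epsilon B^d$, combined with an elementary geometric identity relating the metric projection data of $K+\epsilon B^d$ to that of $K$. First I would record the key observation: if $E\in A(d,k)$ is a flat with $E\cap(K+\epsilon B^d)=\emptyset$ and $p(K+\epsilon B^d,E)$ exists, then automatically $E\cap K=\emptyset$, the nearest point $p(K,E)$ exists, the unit direction satisfies $u(K+\epsilon B^d,E)=u(K,E)=:u$, one has $p(K+\epsilon B^d,E)=p(K,E)+\epsilon u$, and $d(K+\epsilon B^d,E)=d(K,E)-\epsilon$. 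Hence $E\in(K+\epsilon B^d)^{(k)}_\delta$ exactly when $E\in K^{(k)}_{\delta+\epsilon}$, and under $\pi$ the flat-support data transforms by the map $t_\epsilon:(x,u,L)\mapsto(x+\epsilon u,u,L)$. This yields the measure identity
$$
\mu^{(k)}_\delta(K+\epsilon B^d,C)=\mu^{(k)}_{\delta+\epsilon}(K,\cdot)\big((t_\epsilon)^{-1}(C)\big)-\mu^{(k)}_{\epsilon}(K,\cdot)\big((t_\epsilon)^{-1}(C)\big)
$$
for a Borel set $C\subset N(d,k)$; equivalently, working directly with $t_\epsilon(C)$ and the fact that $t_\epsilon$ is a homeomorphism, one gets $\mu^{(k)}_\delta(K+\epsilon B^d,t_\epsilon(C))=\mu^{(k)}_{\delta+\epsilon}(K,C)-\mu^{(k)}_\epsilon(K,C)$ once one checks the projection $p(K,E)\in K$ always lands on $\Nor K$ correctly and $t_\epsilon$ respects the $L$-component (which it does, since $L(E)$ is unchanged).

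Next I would substitute the local Steiner formula \eqref{localsteiner} for $K$ into both terms on the right-hand side. Since
$$
\mu^{(k)}_{\delta+\epsilon}(K,C)=\frac{1}{d-k}\sum_{m=0}^{d-k-1}(\delta+\epsilon)^{d-k-m}\binom{d-k}{m}\Theta^{(k)}_m(K,C),
$$
expanding $(\delta+\epsilon)^{d-k-m}$ by the binomial theorem and subtracting $\mu^{(k)}_\epsilon(K,C)$ (which kills the $\delta^0$ terms) gives a polynomial in $\delta$ whose coefficients are explicit linear combinations of the $\Theta^{(k)}_m(K,C)$ with binomial weights. On the other hand, \eqref{localsteiner} applied to the convex body $K+\epsilon B^d$ reads
$$
\mu^{(k)}_\delta(K+\epsilon B^d,t_\epsilon(C))=\frac{1}{d-k}\sum_{m=0}^{d-k-1}\delta^{d-k-m}\binom{d-k}{m}\Theta^{(k)}_m(K+\epsilon B^d,t_\epsilon(C)).
$$
Comparing the two polynomial expansions in $\delta$ and equating coefficients of $\delta^{d-k-m}$ (a Vandermonde-type uniqueness, valid since the identity holds for all $\delta>0$), one solves for $\Theta^{(k)}_m(K+\epsilon B^d,t_\epsilon(C))$ and, after simplifying the resulting binomial sum via the identity $\binom{d-k}{\ell}\binom{\ell}{m}=\binom{d-k}{m}\binom{d-k-m}{\ell-m}$, arrives at $\sum_{j=0}^m\epsilon^j\binom{m}{j}\Theta^{(k)}_{m-j}(K,C)$. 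Since $C$ is an arbitrary Borel set, this is the claimed measure identity.

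I expect the main obstacle to be the bookkeeping in the geometric reduction step, not any deep difficulty: one must verify carefully that $(K+\epsilon B^d)^{(k)}$ and $K^{(k)}$ are compatible (i.e. that taking the outer parallel body does not create or destroy the uniqueness of nearest-point pairs, and that the $\mu_k$-null exceptional sets match up), and that the homeomorphism $t_\epsilon$ interacts correctly with the push-forward under $\pi$. Once the identity $\mu^{(k)}_\delta(K+\epsilon B^d,t_\epsilon(C))=\mu^{(k)}_{\delta+\epsilon}(K,C)-\mu^{(k)}_\epsilon(K,C)$ is established, the remainder is the routine binomial-coefficient manipulation sketched above, which is why the authors relegate the full computation to the references \cite{Hind} and \cite{HTW}.
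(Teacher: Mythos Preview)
Your proposal is correct and is precisely the argument the paper has in mind: the authors do not give a proof in the paper itself but state that the result ``follows easily from the defining equation \eqref{localsteiner}'' and refer to \cite{Hind} and \cite{HTW} for details, and your derivation---the geometric identity $\mu^{(k)}_\delta(K+\epsilon B^d,t_\epsilon(C))=\mu^{(k)}_{\delta+\epsilon}(K,C)-\mu^{(k)}_\epsilon(K,C)$ followed by the binomial expansion and comparison of coefficients---is exactly that route. The bookkeeping you flag (equivalence of $(K+\epsilon B^d)^{(k)}$ with $\{E\in K^{(k)}:d(K,E)>\epsilon\}$, and the relation $\pi_{K+\epsilon B^d}=t_\epsilon\circ\pi_K$) goes through without difficulty, and the binomial identity $\binom{d-k}{m}\binom{d-k-m}{n-m}=\binom{d-k}{n}\binom{n}{m}$ you invoke is what makes the coefficient comparison close.
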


We mention some measures obtained as projections. The Borel measure $S^{(k)}_{m}(K,\cdot )$ on   $F^\perp (d,k)$, which 
is defined by
$$
S^{(k)}_{m}(K,\cdot) :=  \Theta^{(k)}_{m}(K,\R^d\times \cdot ),
$$
is called the $m$-th {\it $k$-flag area measure} of $K$, for $m\in\{0,\ldots,d-k-1\}$. Notice that the flag manifold $F^\perp (d,k)$ is isomorphic to   $\Nor_k (B^d)$. The measure $S^{(0)}_{m}(K,\cdot )$ is (up to a natural identification) the ordinary $m$-th area measure  $S_{m}(K,\cdot )$ of $K$. This follows easily by comparing the polynomial expansion \eqref{localsteiner}, for $k=0$ and $C = \R^d\times \cdot$, with the results in \cite[Chapter 4]{S}. The other projection is a Borel measure  on $\R^d\times G(d,k)$, which is defined by 
$$
C^{(k)}_{m}(K,\cdot ) :=  \Theta^{(k)}_{m}(K,\{ (x,u,L)\in \R^d\times S^{d-1}\times G(d,k) : (x,L)\in \cdot\} ).
$$
We call it the $m$-th {\it $k$-flag curvature measure} of $K$, for $m\in\{0,\ldots,d-k-1\}$. 
The measure $C^{(0)}_{m}(K,\cdot )$ is the ordinary $m$-th curvature measure  $C_{m}(K,\cdot )$ of $K$ (see again \cite[Chapter 4]{S})  and  $\Theta^{(0)}_{m}(K,\cdot)$ is the ordinary $m$-th support measure of $K$, for $m\in\{0,\ldots,d-1\}$. More generally, from \eqref{stern} and \cite[Proposition 1]{HTW} we conclude that
$$
\Theta^{(k)}_m(K,\cdot\times G(d,k))=\frac{\omega_{d-k}}{\omega_d}\, \Theta_m(K,\cdot),
$$
for $m\in\{0,\ldots,d-1\}$. In particular, we have
\begin{equation}\label{specialS}
S^{(k)}_m(K,\cdot\times G(d,k))=\frac{\omega_{d-k}}{\omega_d}\, S_m(K,\cdot).
\end{equation}

In order to clarify the connection between the flag measures introduced by Theorem \ref{steiner} and the measure defined by \eqref{secondflagmeasure}, we observe that for a polytope $P$ and a Borel set $A\subset F^\perp (d,k)$, a special case of 
 \eqref{gensuppmeas} implies that
\begin{align}\label{polsurface} 
S^{(k)}_m(P,A)
=& \binom{d-k-1}{m}^{-1}\sum_{F\in {\cal F}_m(P)}\int_{G(d,k)}V_m(F|L^\perp)\int_{L^\perp\cap n(P,F)} \cr
&\times {\bf 1}\{ (u,L)\in A\} \,\HH^{d-k-m-1} (du)\,\nu_k(dL) ,
\end{align}
where $m\in\{0,\ldots,d-k-1\}$. The next theorem provides an alternative representation of this measure. 

\bigskip

\begin{theorem}\label{altrepr}
Let $P\in{\cal P}$, $k\in\{0,\ldots,d-1\}$ and $m\in\{0,\ldots, d-k-1\}$. Let $f:F^\perp(d,k)\to[0,\infty)$ be measurable. Then
\begin{align}\label{altrepr-a}
&\binom{d-k-1}{m}\int_{F^\perp (d,k)} f(u,L) \,S^{(k)}_m(P,d(u,L)) \\
&\qquad = \frac{\omega_{d-k}}{\omega_d} \sum_{F\in {\cal F}_m(P)} V_m(F) 
\int_{n(P,F)}\int_{G(u^\perp,k)}
 [F,L]^2  f(u,L)\,\nu^{u^\perp}_k(dL)\, \HH^{d-m-1} (du).\notag 
\end{align}
\end{theorem}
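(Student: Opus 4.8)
The plan is to start from the explicit polytopal formula \eqref{polsurface} for $S^{(k)}_m(P,\cdot)$ and rewrite the inner double integral over $G(d,k)$ and $L^\perp\cap n(P,F)$ so that the roles of the "outer" subspace $L$ and the normal directions $u$ are interchanged, landing on an integral over $n(P,F)$ on the outside and over $G(u^\perp,k)$ on the inside. Concretely, fix a face $F\in\mathcal{F}_m(P)$ and consider
\[
J_F := \int_{G(d,k)} V_m(F|L^\perp)\int_{L^\perp\cap n(P,F)} \mathbf{1}\{(u,L)\in A\}\,\HH^{d-k-m-1}(du)\,\nu_k(dL).
\]
The first step is to replace $V_m(F|L^\perp)$ by $|\langle L(F)^\perp, W\rangle|^2$-type Jacobian factors: since $L(F)$ has dimension $m$ and is (for $\nu_k$-a.e.\ $L$) in general position with $L^\perp$, one has $V_m(F|L^\perp)=|\langle L(F), L^\perp\rangle|\,V_m(F)$, where $|\langle L(F),L^\perp\rangle|$ is the absolute determinant of the projection $L(F)\to L^\perp$. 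Equivalently $|\langle L(F),L^\perp\rangle| = |\langle L(F)^\perp, L\rangle|$ because projecting the $m$-space onto the $(d-k)$-space is adjoint to projecting the $(d-m)$-space $L(F)^\perp$ onto the $k$-space $L$. This pulls the constant $V_m(F)$ out and turns $J_F$ into a purely Grassmannian integral weighted by $|\langle L(F)^\perp,L\rangle|$.

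The second and main step is a Fubini/coarea manipulation on the set $\{(u,L): L\in G(d,k),\ u\in L^\perp\cap n(P,F)\}$. One writes this as a fibration in two ways: over $L\in G(d,k)$ with fiber $L^\perp\cap n(P,F)$ (equipped with $\HH^{d-k-m-1}$, the right dimension since $n(P,F)$ is $(d-m-1)$-dimensional and generically meets the $(d-k)$-space $L^\perp$ in a $(d-k-m-1)$-sphere), and over $u\in n(P,F)\subset S^{d-1}$ with fiber $\{L\in G(d,k): u\in L^\perp\}=G(u^\perp,k)$ (equipped with $\nu^{u^\perp}_k$). Comparing the two disintegrations of the natural measure on the incidence variety produces a Jacobian that, after the standard computation (the same kind underlying the Crofton-type formulas in \cite{SW}), is exactly $|\langle F^\perp, L\rangle|^2$ together with the normalizing constant $\omega_{d-k}/\omega_d$. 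I would either cite the relevant Blaschke--Petkantschin-type identity from \cite{SW} or, since $n(P,F)$ is a spherical polytope and $f$ only needs to be nonnegative measurable, verify the identity directly on indicator functions of the form $\mathbf{1}\{u\in\,\cdot\,,\ L\in\,\cdot\,\}$ by rotating so that $L(F)^\perp$ is a coordinate subspace and computing the two iterated integrals.

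The third step is bookkeeping: substitute the rewritten $J_F$ back into \eqref{polsurface}, multiply through by $\binom{d-k-1}{m}$ to cancel the inverse binomial coefficient, sum over $F\in\mathcal{F}_m(P)$, and observe that the resulting right-hand side is precisely \eqref{altrepr-a}. As a consistency check one can specialize $f\equiv 1$: the left side becomes $\binom{d-k-1}{m}\,S^{(k)}_m(P,F^\perp(d,k))$, which by \eqref{specialS} equals $\tfrac{\omega_{d-k}}{\omega_d}\binom{d-k-1}{m}\,S_m(P,S^{d-1})$, while the right side becomes $\tfrac{\omega_{d-k}}{\omega_d}\sum_F V_m(F)\int_{n(P,F)}(T_k\mathbf{1})(u)\,\HH^{d-m-1}(du)$; since $T_k\mathbf{1}\equiv 1$ (the transform \eqref{integraltransform} is a probability-measure average) this reduces to $\tfrac{\omega_{d-k}}{\omega_d}\sum_F V_m(F)\,\HH^{d-m-1}(n(P,F))$, matching via the polytopal formula for $S_m$. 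The only genuine obstacle is getting the Jacobian in the change of disintegration exactly right — in particular confirming that the square $|\langle F^\perp,L\rangle|^2$ (rather than a single power) appears, which comes from the fact that the projection Jacobian enters once from the fiber-measure comparison and once from the substitution $V_m(F|L^\perp)=|\langle F^\perp,L\rangle|\,V_m(F)$ in step one.
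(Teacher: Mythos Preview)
Your outline is essentially the paper's proof: start from \eqref{polsurface}, extract $V_m(F)$ via $V_m(F|L^\perp)=|\langle F^\perp,L\rangle|\,V_m(F)$, then swap the order of integration over $(L,u)$ to pick up a second factor $|\langle F^\perp,L\rangle|$ and the constant $\omega_{d-k}/\omega_d$. The paper carries out the interchange by citing \cite[Theorem 1]{AZ91} (Arbeiter--Z\"ahle's spherical kinematic relation), which gives exactly the Jacobian $|\langle F,U\rangle|$ for the tangent space $T_u n(P,F)$ and the constant $\beta(d,d-k,1)=\omega_{d-k}/\omega_d$; that reference is more on point than the generic Blaschke--Petkantschin material in \cite{SW} you allude to.

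One correction to your side remark: the consistency check with $f\equiv 1$ does work, but not for the reason you give. The integral $\int_{G(u^\perp,k)}|\langle F^\perp,L\rangle|^2\,\nu^{u^\perp}_k(dL)$ is \emph{not} equal to $1$ --- the transform \eqref{integraltransform} averages $|\langle L,M\rangle|^2\tilde h$, not $\tilde h$, against a probability measure, so $T_j\mathbf{1}\not\equiv 1$. The correct value is $\binom{d-1-k}{m}/\binom{d-1}{m}$ (see Remark 3.10 in the paper, via \cite{Federer1969}), and this is precisely the factor needed to match the left side after applying \eqref{specialS} and the polytopal formula for $S_m$.
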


\begin{proof} From \eqref{polsurface} and passing to orthogonal subspaces, we obtain 
\begin{align*}
&\binom{d-k-1}{m}\int_{F^\perp (d,k)} f(u,L) \,S^{(k)}_m(P,d(u,L))\\
&\quad = \sum_{F\in {\cal F}_m(P)}\int_{G(d,d-k)}V_m(F|U)\int_{U\cap n(P,F)} 
 f(u,U^\perp) \,\HH^{d-k-m-1} (du)\,\nu_{d-k}(dU) \\
&\quad = \sum_{F\in {\cal F}_m(P)}V_m(F)\int_{G(d,d-k)}\int_{U\cap n(P,F)} 
[F,U^\perp]  f(u,U^\perp)\,\HH^{d-k-m-1} (du)\,\nu_{d-k}(dU) .
\end{align*}
Next we interchange the order of integration by applying \cite[Theorem 1]{AZ91}. 
Specifically, we replace 
$m$ in \cite[Theorem 1]{AZ91} by $d-m-1$, $p$ by $d-k$, and choose $k$ there as $1$. Moreover, we consider 
the $\mathcal{H}^{d-m-1}$-rectifiable set $n(P,F)\subset S^{d-1}$ and the function $h(u,U)=f(u,U^\perp)[F,U^\perp]$, 
independent of the tangent space $T_u n(P,F)$. The expression $J(T_u n(P,F),U)$ in \cite[Theorem 1]{AZ91} is introduced 
in \cite[p.~336, (2)]{AZ91} and defined as the $k$-volume of the orthogonal projection of a $k$-dimensional unit cube in 
$V:=\left(T_u n(P,F)\cap U\right)^\perp\cap T_u n(P,F)$ to $U^\perp$. Since 
$$V=\left(\left(L(F)\oplus\langle u\rangle\right)^\perp\cap (U\cap u^\perp)\right)^\perp\cap (L(F)\oplus\langle u\rangle)^\perp,$$ 
it follows that 
\begin{align*}
J(T_u n(P,F),U)&=[V^\perp,U^\perp]=[\left(L(F)\oplus\langle u\rangle\right)^\perp\cap(U\cap u^\perp)+L(F)\oplus\langle u\rangle,U^\perp]\\
&=[L(F)\oplus\langle u\rangle,U^\perp]=[F,U^\perp].
\end{align*}
Moreover, the constant $\beta(d,p,k)$, which  
is defined in \cite[p.~336-7]{AZ91}, is a ratio of Hausdorff measures of two sets. This constant simplifies considerably in the case of $\beta(d,d-k,1)$, for which we obtain
$$
\beta(d,d-k,1)=\Gamma\left(\frac{1}{2}\right)^{-k}\frac{\Gamma\left(\frac{d}{2}\right)}{\Gamma\left(\frac{d-k}{2}\right)}
=\frac{\omega_{d-k}}{\omega_d}.
$$
Finally, recall that $\langle u\rangle$ is the linear subspace spanned by $u$. Then we get\begin{align*}
&\binom{d-k-1}{m}\int_{F^\perp (d,k)} f(u,L) \,S^{(k)}_m(P,d(u,L))\\
&\quad = \frac{\omega_{d-k}}{\omega_d}\sum_{F\in {\cal F}_m(P)}V_m(F)\int_{n(P,F)}
\int_{G(\langle u\rangle,d-k)}[F,U^\perp]^2
 f(u,U^\perp) \,\nu_{d-k}^{\langle u\rangle}(dU)\,\HH^{d-m-1}(du) \\
&\quad = \frac{\omega_{d-k}}{\omega_d}\sum_{F\in {\cal F}_m(P)}V_m(F)\int_{n(P,F)}
\int_{G( u^\perp,k)}[F,L]^2
 f(u,L) \,\nu_{k}^{u^\perp}(dL)\,\HH^{d-m-1}(du),
\end{align*}
which proves the assertion.
\end{proof}

\bigskip

\begin{rem} {\rm In \cite{BHW} we provide a measure geometric approach to flag support measures, which leads to a representation of $\Theta^{(k)}_m(K,\cdot)$, 
and therefore also of $S^{(k)}_m(K,\cdot)$, for a general convex body $K$. In the case of 
polytopes, the representation for $S^{(k)}_m(K,\cdot)$ obtained in \cite{BHW} reduces to Theorem \ref{altrepr}.}
\end{rem}

\begin{rem} {\rm From \eqref{altrepr-a} we deduce that
\begin{align*} 
&\binom{d-k-1}{m}\,S^{(k)}_m(P,\cdot\times G(d,k)) \\
& \quad = \frac{\omega_{d-k}}{\omega_d} \sum_{F\in {\cal F}_m(P)} V_m(F) 
\int_{n(P,F)}   \mathbf{1}\{u\in\cdot\} \int_{G(u^\perp,k)}
 [F,L]^2 \,\nu^{u^\perp}_k(dL)\, \HH^{d-m-1} (du).
\end{align*}
Since \cite[p.~139 and 3.2.13, p.~251]{Federer1969} imply that, independently of $u\in S^{d-1}$ and $L(F)\subset u^\perp$, 
$$
\int_{G(u^\perp,k)}
 [F,L]^2 \,\nu^{u^\perp}_k(dL)=\left(\frac{\beta_2(d-1,m)}{\beta_2(d-1-k,m)}\right)^2=\frac{\binom{d-1-k}{m}}{\binom{d-1}{m}},
$$
we obtain again \eqref{specialS}.
}
\end{rem}

\bigskip

Next we consider the special case $m=j$ and $k=d-1-j$ of Theorem  \ref{altrepr}. In particular, for a measurable function 
$f:F(d,d-j)\to [0,\infty)$ we get

\begin{align*}
& \int_{F^\perp (d,d-1-j)} f(u,L\oplus \langle u\rangle) \,S^{(d-1-j)}_j(P,d(u,L))\\
&\quad =\frac{\omega_{j+1}}{\omega_d}\sum_{F\in {\cal F}_j(P)}V_j(F)\int_{n(P,F)}
\int_{G( u^\perp,d-1-j)}[F,L]^2
 f(u,L\oplus \langle u\rangle) \\
 &\qquad \qquad \,\nu_{d-1-j}^{u^\perp}(dL)\,\HH^{d-j-1}(du)\\
&\quad =\frac{\omega_{j+1}}{\omega_d}\sum_{F\in {\cal F}_j(P)}V_j(F)\int_{n(P,F)}
\int_{G( \langle u\rangle ,d-j)}[F,U]^2
 f(u,U) \\
&\qquad \qquad \ \nu_{d-j}^{\langle u\rangle}(dU)\,\HH^{d-j-1}(du)\\
&\quad =\frac{\omega_{j+1}}{\omega_d}\int_{F(d,d-j)} f(u,U)\, \psi_j(P,d(u,U)),
\end{align*}
where we used that $[F,L]=[F,L\oplus \langle u\rangle ]$ if $L(F),L\subset u^\perp$. 

Since $K\mapsto S^{(d-1-j)}_j(K,\cdot)$ is weakly continuous, 
the following is a direct consequence.

\begin{koro}\label{corol2} For $j=0,\ldots, d-1$, the measure $\psi_j(K,\cdot)$, 
defined by \eqref{secondflagmeasure} for polytopes $K$, has a continuous extension to all convex bodies $K$. 
\end{koro}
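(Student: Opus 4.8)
The strategy is to reduce the claim to the already-established weak continuity of the flag area measure $S^{(d-1-j)}_j(K,\cdot)$ from Theorem \ref{steiner}(b). The link is provided by the displayed computation immediately preceding the statement of the corollary, which exhibits $\psi_j(P,\cdot)$ (for polytopes $P$) as essentially a push-forward of $S^{(d-1-j)}_j(P,\cdot)$. Concretely, one first notes that the map
$$
\iota : F^\perp(d,d-1-j) \to F(d,d-j), \qquad (u,L)\mapsto (u, L\oplus\langle u\rangle),
$$
is a well-defined homeomorphism (it is continuous, injective on $F^\perp(d,d-1-j)$ since $u\notin L$, and its inverse $(u,U)\mapsto (u,U\cap u^\perp)$ is also continuous), hence push-forward of measures under $\iota$ sends weakly convergent sequences to weakly convergent sequences.

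With this in hand, I would argue as follows. For an arbitrary convex body $K\in\K$, define
$$
\psi_j(K,\cdot) := \frac{\omega_d}{\omega_{j+1}}\,\iota_* S^{(d-1-j)}_j(K,\cdot),
$$
that is, $\psi_j(K,C) = \tfrac{\omega_d}{\omega_{j+1}} S^{(d-1-j)}_j\big(K, \iota^{-1}(C)\big)$ for Borel sets $C\subseteq F(d,d-j)$. This is a finite Borel measure on $F(d,d-j)$. The displayed identity preceding the corollary, read with $f=\mathbf 1_C\circ\iota$ (equivalently, with $f(u,U)=\mathbf 1\{(u,U)\in C\}$), says precisely that for every polytope $P$ this definition reproduces the measure $\psi_j(P,\cdot)$ originally given by \eqref{secondflagmeasure}; so the new definition is consistent with the old one on $\cal P$. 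Finally, weak continuity of $K\mapsto \psi_j(K,\cdot)$ on all of $\K$ is inherited from weak continuity of $K\mapsto S^{(d-1-j)}_j(K,\cdot)$ (Theorem \ref{steiner}(b)) composed with the continuous push-forward $\iota_*$: if $K_i\to K$ in the Hausdorff metric, then for $g\in C(F(d,d-j))$ one has $\int g\,d\psi_j(K_i,\cdot) = \tfrac{\omega_d}{\omega_{j+1}}\int (g\circ\iota)\,dS^{(d-1-j)}_j(K_i,\cdot)\to \tfrac{\omega_d}{\omega_{j+1}}\int (g\circ\iota)\,dS^{(d-1-j)}_j(K,\cdot)=\int g\,d\psi_j(K,\cdot)$, since $g\circ\iota\in C(F^\perp(d,d-1-j))$. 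That gives the desired continuous extension.

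The only genuinely substantive point is the verification that $\iota$ is a homeomorphism and, more importantly, that the chain of equalities in the display before the corollary is correct — in particular the step where the integral transform $T_j$ appearing implicitly in \eqref{secondflagmeasure} is matched with the factor $|\langle F^\perp,U\rangle|^2$ coming from Theorem \ref{altrepr}, and the identification $|\langle F,L^\perp\rangle| = |\langle F^\perp,L\rangle|$ used to pass between $L$ and $U=L\oplus\langle u\rangle$. These are exactly the computations carried out just above the corollary (as the special case $m=j$, $k=d-1-j$ of Theorem \ref{altrepr}), so in the proof itself I would simply cite that display. Everything else is a formal consequence of weak continuity, so the proof is genuinely short once Theorem \ref{altrepr} is available; I expect no real obstacle beyond keeping the index bookkeeping between $F^\perp(d,d-1-j)$ and $F(d,d-j)$ straight.
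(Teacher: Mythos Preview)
Your proposal is correct and follows exactly the paper's own approach: the paper's proof amounts to the single sentence ``Since $K\mapsto S^{(d-1-j)}_j(K,\cdot)$ is weakly continuous, the following is a direct consequence'' of the displayed identity preceding the corollary, which is precisely your push-forward argument via $\iota$ made explicit. The only wrinkle is a minor notational slip---you write $f=\mathbf 1_C\circ\iota$ when you mean $f=\mathbf 1_C$ (as your parenthetical ``equivalently'' correctly states)---but this does not affect the argument.
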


\section{Valuations with continuous extensions}

We now show that strongly flag-continuous valuations on polytopes, which are homogeneous 
of degree $j\in\{1,\ldots,d-2\}$, have a continuous extension to all convex bodies.

\begin{theorem}\label{ext}
Let $\varphi = \varphi_j$ be a $j$-homogeneous valuation on ${\cal P}$, $j\in\{1,\ldots,d-2\}$, which is strongly flag-continuous. Then, $\varphi$ has a (unique) extension to ${\cal K}$ which is continuous in the Hausdorff metric.
\end{theorem}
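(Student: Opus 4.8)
The plan is to realise $\varphi$ on polytopes as the integral of a fixed continuous function against the flag measure $\psi_j(P,\cdot)$ of \eqref{secondflagmeasure}, and then to transport this integral representation to all convex bodies using the continuous extension of $\psi_j$ supplied by Corollary~\ref{corol2}.

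First I would unwind the hypothesis of strong flag-continuity. By definition, $\varphi$ has an associated function $f_j$ with
$$
f_j(p)=\int_p \tilde f_j(u,\langle p\rangle)\,\HH^{d-j-1}(du),\qquad p\in{\wp}_{d-j-1},
$$
where $\tilde f_j=T_j\tilde g_j$ for some $\tilde g_j\in C(F(d,d-j))$ and $T_j$ is the transform \eqref{integraltransform}. For a polytope $P$ and a $j$-face $F$ one has $\langle n(P,F)\rangle=F^\perp$ (the normal cone at $F$ spans $F^\perp$), and since no $j$-face has dimension $<j$ this gives
$$
\varphi(P)=\sum_{F\in{\cal F}_j(P)}V_j(F)\,f_j(n(P,F))=\sum_{F\in{\cal F}_j(P)}V_j(F)\int_{n(P,F)}\tilde f_j(u,F^\perp)\,\HH^{d-j-1}(du).
$$
Inserting $\tilde f_j=T_j\tilde g_j$, expanding $T_j$ by \eqref{integraltransform}, comparing the resulting iterated integral with the definition \eqref{secondflagmeasure} of $\psi_j(P,\cdot)$, and using the symmetry $|\langle F^\perp,M\rangle|=|\langle M,F^\perp\rangle|$, one arrives at
$$
\varphi(P)=\int_{F(d,d-j)}\tilde g_j(u,L)\,\psi_j(P,d(u,L)),\qquad P\in{\cal P}.
$$
(Equivalently, one may combine the identity $\varphi(P)=\int\tilde f_j\,d\htau_j(P,\cdot)$ with $\psi_j(P,\cdot)=T_j\htau_j(P,\cdot)$ and the self-adjointness of $T_j$, both recorded in Section~3, so that $\int\tilde g_j\,d(T_j\htau_j(P,\cdot))=\int(T_j\tilde g_j)\,d\htau_j(P,\cdot)$.)

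Next I would define, for an arbitrary $K\in{\cal K}$,
$$
\varphi(K):=\int_{F(d,d-j)}\tilde g_j(u,L)\,\psi_j(K,d(u,L)),
$$
where $\psi_j(K,\cdot)$ denotes the extension provided by Corollary~\ref{corol2}. Since $F(d,d-j)$ is compact, $\tilde g_j$ is continuous and hence bounded, and $K\mapsto\psi_j(K,\cdot)$ is weakly continuous, the map $K\mapsto\varphi(K)$ is continuous with respect to the Hausdorff metric; by the previous step it restricts to the given valuation on ${\cal P}$. Uniqueness of the continuous extension follows from the density of ${\cal P}$ in ${\cal K}$. (If one also wants the extension to be a valuation, it suffices to note that $\psi_j(K,\cdot)$ inherits additivity via Theorem~\ref{steiner}(b) and Corollary~\ref{corol2}.)

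The only point demanding real care is the polytopal identity $\varphi(P)=\int\tilde g_j\,d\psi_j(P,\cdot)$: one must match the defining sums through $T_j$ exactly, keeping track of the Haar measures $\nu^{\langle u\rangle}_{d-j}$ and the symmetry of $|\langle\cdot,\cdot\rangle|$, and check that $T_j$ indeed passes to a transform on measures. Beyond this, the argument is routine — the substance of the theorem lies in the construction, carried out in Section~3, of the weakly continuous flag measures $\Theta^{(k)}_m$ and $\psi_j$ through the local Steiner formula.
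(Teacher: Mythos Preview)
Your proposal is correct and follows the same route as the paper: express $\varphi(P)$ as $\int_{F(d,d-j)}\tilde g_j\,d\psi_j(P,\cdot)$ by unwinding the definition of strong flag-continuity and matching it against \eqref{secondflagmeasure}, then extend to $\mathcal{K}$ via Corollary~\ref{corol2} and invoke weak continuity of $K\mapsto\psi_j(K,\cdot)$. Your added remarks on uniqueness and on the alternative self-adjointness argument are fine and go slightly beyond what the paper writes out.
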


\begin{proof} For a polytope $P$, we have
$$
\varphi (P) =  \sum_{F\in {\cal F}_j(P)} f_{j}(n(P,F)) V_j(F).
$$
Since $\varphi$ is strongly flag-continuous,
\begin{equation*}
f_j (p) = \int_p \tilde f_j(u,\langle p\rangle) \,\HH^{d-j-1}(du),
\end{equation*}
if $ p\in {\wp}_{d-j-1}$ has dimension $d-1-j$, 
and
\begin{equation*}
\tilde f_j (u,L) = \int_{G(\langle u\rangle ,d-j)} [M,L^\perp]^2 \tilde g_j(u,M)
 \,\nu^{\langle u\rangle}_{d-j}(dM) ,\quad (u,L)\in F(d,d-j),
\end{equation*}
for some continuous function $\tilde g_j$ on $F(d,d-j)$.

Hence,
\begin{align*}
\varphi (P)
&\ =  \sum_{F\in {\cal F}_j(P)}  V_j(F)\int_{n(P,F)} \int_{G(\langle u\rangle ,d-j)} [F,M]^2\tilde g_j(u,M)
\, \nu^{\langle u\rangle}_{d-j}(dM)\, \HH^{d-j-1}(du)\cr
&\ = \int_{F(d,d-j)} \tilde g_j(M,u)\, \psi_j (P, d(M,u))
\end{align*}
by \eqref{secondflagmeasure}.

We now define,
$$\varphi (K) = \int_{F(d,d-j)} \tilde g_j(M,u)\, \psi_j (K, d(M,u)),
$$
for $K\in {\cal K}$, where $\psi_j (K,\cdot )$ is the extension ensured by Corollary \ref{corol2}. Since $K\mapsto \psi_j (K,\cdot )$ is  weakly continuous, the functional $\varphi$ is continuous on ${\cal K}$ (and  a translation invariant valuation).
\end{proof}

\bigskip

\noindent
Authors' addresses: 

\bigskip

\noindent
Wolfram Hinderer, 
Robert-Koch-Str. 196, 
D-73760 Ostfildern, 
wolfram@hinderer-lang.de

\bigskip

\noindent
Daniel Hug, Karlsruhe Institute of Technology (KIT), 
Department of Mathematics, 
D-76128 Karlsruhe, daniel.hug@kit.edu

\bigskip

\noindent
Wolfgang Weil, Karlsruhe Institute of Technology (KIT), 
Department of Mathematics, 
D-76128 Karls\-ruhe, wolfgang.weil@kit.edu


\begin{thebibliography}{99}

\bibitem{A2} 
S. Alesker, Theory of valuations on manifolds. I: 
Linear spaces. {\em Israel J. Math.} {\bf 156}, 311--339 (2006).

\bibitem{Al} 
S. Alesker, Theory of valuations on manifolds, II. 
{\em Adv. Math.} {\bf 207}, 420--454 (2006). 

\bibitem{A3} 
S. Alesker, On extendability by continuity of valuations on convex polytopes. 
{\em arXiv:1205.1336v5} (2013). 

\bibitem{Amb1} 
R.V. Ambartzumian, Combinatorial integral geometry, metrics and zonoids. {\em Acta Appl. Math.} {\bf 9}, 3--28 (1987).

\bibitem{Amb2} 
R.V. Ambartzumian, {\em Factorization Calculus and Geometric Probability}. Cambridge University Press, Cambridge 1990.

\bibitem{AZ91}
E. Arbeiter, M. Z\"ahle,  Kinematic relations for Hausdorff moment measures in spherical spaces. {\em Math. Nachr.} {\bf 153}, 333--348 (1991).

\bibitem{Bernig2011} 
A. Bernig, Algebraic integral geometry. In: {\em Global Differential Geometry}, edited by C. B\"ar, J. Lohkamp and M. Schwarz, Springer 2012.

\bibitem{BHW}
M. Bianchini, D. Hug, W. Weil, Flag measures of convex sets and convex functions. In preparation (2014).


\bibitem{Edelsbrunner}
H. Edelsbrunner, {\em Algorithms in Combinatorial Geometry}. Springer, Berlin 1987.


\bibitem{Federer1969}
H. Federer, {\em Geometric Measure Theory}. Springer, Berlin 1969.

\bibitem{Hind} 
W. Hinderer, {\em Integral Representations of Projection Functions}. PhD Thesis, University of Karlsruhe, Karlsruhe 2002. 

\bibitem{Hug} 
D. Hug,  {\em Measures, Curvatures and Currents in Convex Geometry}. 
Habilitation Thesis, University of Freiburg, Freiburg 1999. 

\bibitem{HS}
D. Hug, R. Schneider, {Local tensor valuations}. {\em arXiv:1309.3998v1} (2013).

\bibitem{HTW}
D. Hug, I. T\"urk, W. Weil, Flag measures for convex bodies. In: {\it Asymptotic Geometric Analysis} (eds M.~Ludwig, V.~D.~Milman, V.~Pestov,  N.~Tomczak-Jaegermann), Fields Institute Communications {\bf 68}, Springer, pp.~145--187 (2013).


\bibitem{JT2013}
M. Joswig, T. Theobald, {\em Polyhedral and Algebraic Methods in Computational Geometry}, Springer, London 2013.


\bibitem{KW99} 
M. Kiderlen, W. Weil, Measure-valued valuations and mixed curvature measures of convex bodies. {\em Geom. Dedicata} {\bf 76}, 291--329 (1999).

\bibitem{Kr} 
R. Kropp, {\em Erweiterte Oberfl\"achenma{\ss}e f\"ur konvexe K\"orper.} Diploma Thesis, University of Karlsruhe, Karlsruhe 1990.

\bibitem{McM83} 
P. McMullen, Weakly continuous valuations on convex polytopes. {\em Arch. Math.} {\bf 41}, 555--564 (1983).

\bibitem{McM93}
P. McMullen,  Valuations and dissections. In: {\em Handbook of Convex Geometry, vol. B} (eds P.~M.~Gruber \& J.~M.~Wills).
North-Holland, Amsterdam, pp.~933--988 (1993).

\bibitem{McMS}
P. McMullen, R. Schneider, Valuations on convex bodies. In: {\em Convexity and its Applications} (eds P.~M.~Gruber \& J.~M~Wills), 
Birkh\"auser, Basel, pp.~170--247 (1983).

\bibitem{RZ95} 
J. Rataj, M. Z\"ahle, Mixed curvature measures for sets of positive reach and a translative integral formula.
{\em Geom. Dedicata} {\bf 57}, 259--283 (1995).

\bibitem{S} 
R. Schneider, {\em Convex Bodies: The Brunn-Minkowski Theory}. 2nd edition. Cambridge University Press, Cambridge 2014.

\bibitem{SW} 
R. Schneider, W. Weil, {\em Stochastic and Integral Geometry}. Springer, Berlin-Heidelberg 2008.

\bibitem{W1} 
W. Weil, Zuf\"allige Ber\"uhrung konvexer K\"orper durch $q$-dimensionale Ebenen. {\em Result. Math.} {\bf 4}, 84--101 (1978).

\bibitem{W2} 
W. Weil, Kinematic integral formulas for convex bodies. In: {\em Contributions to Geometry}, Proc. Geometry Symp. Siegen 1978 (eds J. T\"olke \& J.M. Wills). Birkh\"auser, Basel, pp.~60--76 (1979). 


\end{thebibliography}
\end{document}